\documentclass[12pt, reqno]{amsart}
\usepackage{amsfonts}
\usepackage{bbm}
\usepackage{amscd,amsfonts}
\usepackage{amssymb, eucal, amsfonts, amsmath, xypic, latexsym}
\usepackage{pifont}
\usepackage{mathrsfs,color}
\usepackage{amsthm,indentfirst,bm,fancyhdr,dsfont}
\usepackage{graphicx}
\usepackage[all]{xy}
\usepackage[CJKbookmarks=true]{hyperref}

\usepackage{mathrsfs}
\usepackage{amsmath}
\usepackage{amssymb}
\usepackage{hyperref}

\setlength{\textheight}{8.6in} \setlength{\textwidth}{35pc}
\setlength{\topmargin}{-0.1in} \setlength{\footskip}{0.2in}
\setlength{\oddsidemargin}{.573125pc}
\setlength{\evensidemargin}{\oddsidemargin}

\newtheorem{theorem}{Theorem}[section]

\newtheorem{prop}[theorem]{Proposition}

\theoremstyle{definition}

\newtheorem{defn}[theorem]{Definition}

\newtheorem{remark}[theorem]{Remark}

\numberwithin{equation}{section}

\def\ggg{\mathfrak{g}}

\def\p{\mathfrak{p}}

\def\ggg{\mathfrak{g}}

\def\hhh{\mathfrak{h}}

\def\bbb{\mathfrak{b}}

\def\nnn{\mathfrak{n}}
\def\uuu{\mathfrak{u}}

\def\bbc{\mathbb{C}}
\def\bbf{\mathbb{F}}
\def\bbz{\mathbb{Z}}

\def\K{\mathbb{K}}

\def\0{\bar{0}}
\def\1{\bar{1}}

\def\GL{\text{GL}}
\def\Hom{\text{Hom}}

{\vskip-\lastskip\medskip
  \noindent
  {\em #1.}\enspace
  }%
{\qed\par\medskip
  }
\begin{document}

\title[The representations of the Lie superalgebra $\p(3)$ in prime characteristic]
{The representations of the Lie superalgebra $\p(3)$ in prime characteristic}
\author{Ye Ren}
\address{School of Mathematics, China University of Mining and Technology, Jiangsu 221116, China}\email{TBH560@cumt.edu.cn}

\subjclass[2010]{20G05; 17B20;17B45; 17B50}
 \keywords{restricted Lie superalgebras, irreducible modules, p-characters, typical weights.}

\begin{abstract} Let $\ggg$ be the Lie superalgebra $\p(3)$ of rank 2 over an algebraically closed field $\K$ of characteristic $p > 3$. We classify all irreducible modules of $\ggg$, and give the character formulae for irreducible modules.
\end{abstract}

\maketitle
\setcounter{tocdepth}{1}
\section*{Introduction}
The finite-dimensional simple Lie superalgebras over the field of complex numbers were classified by Kac in the 1970s (cf. \cite{K77}).  Although the classification of finite-dimensional simple Lie superalgebras over a field of prime characteristic has not been completed, the modular
version of the complex simple Lie superalgebras is still important. Recently, there has been an increasing interest in modular representation theory of restricted Lie superalgebras (cf. \cite{Du16, Sh23, WZ09, WZ11, Z24}, etc.). Nevertheless, their irreducible modules are not well-understood. The goal of this paper is to initiate the study of modular representations of the Lie superalgebra $\p(n)$ over an algebraically closed field $\K$ of characteristic $p$. The method for classifying the irreducible modules of $\p(3)$ is derived from \cite{Y13, Z09}.

This article is divided into two parts. The first part gives the general setup for the restricted Lie superalgebras $\p(n)$ and gives a proof that the Kac modules $K_{\chi}(\lambda)(\lambda\in \Lambda_{\chi})$ are irreducible for regular nilpotent $\chi$ when $p > n$. The second part gives a complete classification of simple $\p(3)$-modules and their character formulae for $p>3$.

\section{Preliminaries}
\subsection{}
Let $\K$ be an algebraically closed field of characteristic $p>2$. In this paper, it is the base field of the theory.
The concept of restricted Lie superalgebras given in \cite{WZ09} is a generalization of the concept of restricted Lie algebras (cf. \cite{J04}).
\begin{defn}
A Lie superalgebra $\ggg=\ggg_{\0}\oplus \ggg_{\1}$ is called a restricted Lie superalgebra, if there is a $p$th power map $\ggg_{\0}\rightarrow\ggg_{\0}$, denoted as $[p]$, satisfying:

(1) $(kx)^{[p]}= k^px^{[p]}$ for any $k\in \K$ and any $x\in \ggg_{\0}$,

(2) $[x^{[p]} ,y] = (\text{ad}x)^p(y)$ for any $x\in\ggg_{\0}$ and any $y\in\ggg$,

(3) $(x + y)^{[p]}= x^{[p]}+ y^{[p]}+ \sum\limits_{i=1}^{p-1}s_i(x,y)$ for any $x, y\in\ggg_{\0}$ where $is_i(x, y)$ is the coefficient of $\lambda^{i-1}$ in $(\text{ad}(\lambda x + y))^{p-1}(x)$.
\end{defn}
By definition, $x^p-x^{[p]}$ are in the center of the universal enveloping algebra $U(\ggg)$ for all $x\in\ggg_{\0}$. According to \cite[Proposition 2.3]{WZ09}, every irreducible $\ggg$-module $M$ is finite dimensional. So there is a $\chi\in \ggg_{\0}^*$, such that
$$(x^p-x^{[p]})_{|M}=\chi(x)^p\text{id}_{|M},$$
which is called a $p$-character of $M$. Let $I_{\chi}$ be the ideal of $U(\ggg)$ generated by $x^p-x^{[p]}-\chi(x)^p, x\in\ggg_0$. We call $U_{\chi}(\ggg)=U(\ggg)/I_{\chi}$ the reduced enveloping algebra of $\ggg$, call $U_{0}(\ggg)$ the restricted enveloping algebra of $\ggg$. The $U_{0}(\ggg)$-modules are usually called restricted modules.
\subsection{}
For $n>2$, the standard matrix realization of the periplectic Lie superalgebra is given as follows:
\[\widetilde{\p}(n)=\left\{X=\begin{pmatrix}
A & B \\
C & D \\
\end{pmatrix}|A=-D^t, B^t=B, C^t=-C\right\}
.\]
Here, $A, B, C, D$ are $n\times n$ matrices. The periplectic Lie superalgebra $\widetilde{\p}(n)$ is a restricted Lie superalgebra. Let the Lie superalgebra $\p(n)$ be the commutator of $\widetilde{\p}(n)$, so
\[\p(n)=\left\{X=\begin{pmatrix}
A & B \\
C & D \\
\end{pmatrix}|A=-D^t, B^t=B, C^t=-C, \text{tr}(A)=\text{tr}(B)=0\right\}
.\]
It is also a restricted Lie superalgebra. From now on, let $\ggg=\p(n)$. In this paper, we mainly consider the representation theory of $\p(n)$.

Let $E_{i,j}$ be the matrix of order $2n\times 2n$ which the $i$th row, $j$th column is $1$ and the rest are $0$, $1\leq i,j\leq n$. Let $\epsilon_i$ be the linear map $\epsilon_i(\sum\limits_{k=1}^na_kE_{k,k})=a_i, a_i\in \K$. Denote by
$$H_{\epsilon_i-\epsilon_j}= E_{i,i}-E_{j,j}-E_{n+i,n+i}+E_{n+j,n+j}, 1\leq i<j\leq n,$$
$$X_{\epsilon_i-\epsilon_j}= E_{i,j}-E_{n+j,n+i}, 1\leq i,j\leq n, i\neq j,$$
$$X_{-\epsilon_i-\epsilon_j}= E_{n+i,j}-E_{n+j,i}, 1\leq i< j\leq n,$$
$$X_{\epsilon_i+\epsilon_j}= E_{i,n+j}+E_{j,n+i}, 1\leq i\leq j\leq n.$$
Let $\hhh$ be the Cardan subalgebra of $\ggg$ spanned by $H_{\epsilon_i-\epsilon_{i+1}}(1\leq i\leq n-1)$. With respect to $\hhh$, there is a standard Cardan decomposition of $\ggg=\nnn^-\oplus\hhh\oplus\nnn$. Let $\ggg_{-1}$ be the subalgebra generated by $X_{-\epsilon_i-\epsilon_j}(1\leq i< j\leq n)$ and
$\ggg_{+1}$ be the subalgebra generated by $X_{\epsilon_i+\epsilon_j}(1\leq i\leq j\leq n)$. Let $\nnn^-=\nnn_0^-\oplus\ggg_{-1}$, $\nnn=\nnn_0\oplus\ggg_{+1}$.
So $\ggg_{\0}=\nnn_0^-\oplus\hhh\oplus\nnn_0$, and it is isomorphic to the special linear Lie algebra $\mathfrak{sl}(n)$. Denote by $\bbb_0=\nnn_0\oplus\hhh$, $\bbb=\nnn\oplus\hhh$.
Denote by $\Phi$ the root system. Let $\Phi^+ (\Phi^-)$ be the positive(negative) root system. Let $\Phi_0$ consist of even roots.
Then $\ggg=\hhh\oplus\bigoplus\limits_{\alpha\in\Phi}\ggg_{\alpha}$ is the root space decomposition of $\ggg$.
Denote by $\rho_0$ the half sum of the roots of $\nnn_0$. Denote by $W$ the Weyl group of $\Phi_0$. Let $s_{\alpha}\in W$ be the reflection correspond to $\alpha\in \Phi_0$. For $\lambda\in\hhh^*$, the dot action $s_{\alpha}\cdot\lambda=s_{\alpha}(\lambda+\rho_0)-\rho_0$.
The following proposition is a consequence of the PBW theorem for $U(\ggg)$.
\begin{prop}
Suppose $x_1,...,x_s$ is a basis of $\ggg_{\0}$, $y_1,...,y_t$ is a basis of $\ggg_{-1}$, $z_1,...,z_m$ is a basis of $\ggg_{+1}$. Then
$$\{y_1^{b_1}...y_t^{b_t}x_1^{a_1}...x_s^{a_s}z_1^{c_1}...z_m^{c_m}|0\leq a_i\leq p-1, b_j, c_k= 0,1 \text{ for all } i,j,k\}.$$
is a basis of $U_{\chi}(\ggg)$. In particular, dim$U_{\chi}(\ggg)= p^{\text{dim}\ggg_{\0}}2^{\text{dim}\ggg_{\1}}$.
\end{prop}
Let $U(\ggg)_d$ be spanned by above basis elements of $U(\ggg)$ with
$$d=\sum\limits_{k}c_k-\sum\limits_{j}b_j.$$ Then $$U(\ggg)=\bigoplus\limits_{d=k}^jU(\ggg)_d.$$ In this way, $U(\ggg)$ becomes a $\bbz$-grade algebra.
Denote by $d(x)$ the degree of the homogeneous element $x\in U(\ggg)$.

Since $\hhh$ is commutative, for any $\lambda\in\hhh^*$, there is a one-dimensional $U_{\chi}(\hhh)$-module $\K_{\lambda}$. For any $\chi$, define
$$\Lambda_{\chi}=\{\lambda\in\hhh^*| \lambda(h)^p-\lambda(h^{[p]})=\chi^p(h),\forall h\in\hhh\}.$$
Obviously, there are $p^{n-1}$ elements in $\Lambda_{\chi}$.

For $\chi\in \ggg_{\0}^*$, if $\gamma\in Aut_p(\ggg_{\0})$ (the group of automorphisms of $\ggg_{\0}$ as a restricted Lie algebra), then
$U_{\chi}(\ggg)\cong U_{\gamma\cdot\chi}(\ggg)$. Note that $\ggg_{\0}\cong\mathfrak{sl}(n)$, and there is a nature surjective homomorphism $\phi: \mathfrak{gl}(n)^*\rightarrow \ggg_{\0}^*$. So we only need to consider the $\GL(n)$-orbit of $\chi\in\ggg_0^*$.
There is a $\GL(n)$-invariant non-degenerate bilinear form on $\mathfrak{gl}(n)$  induced by trace, so $\mathfrak{gl}(n)\cong\mathfrak{gl}(n)^*$.  For each $\chi\in\ggg_{\0}^*$, there exists $g\in\GL(n)$ with $g\cdot\chi$ corresponds to a Jordan matrix. Then we can assume $\chi(\nnn_0^+)=0$. The $U_{\chi}(\hhh)$-module $\K_{\lambda}$ naturally becomes the $U_{\chi}(\bbb_0)$-module with trivial $\nnn_0$ action. Define the induced $U_{\chi}(\ggg_0)$-module：
$$Z_{\chi}^0(\lambda)=\text{Ind}_{\bbb_0}^{\ggg_0}(\K_{\lambda}).$$
By \cite[10.2]{J98}, $Z_{\chi}^0(\lambda)$ has a unique maximal submodule. Denote by $L_{\chi}^0(\lambda)$ the simple quotient of $Z_{\chi}^0(\lambda)$.
Let $\ggg_{+1}$ trivially acts on the simple $U_{\chi}(\ggg_{\0})$-module $L_{\chi}^0(\lambda)$. Then $L_{\chi}^0(\lambda)$ becomes a $U_{\chi}(\ggg_{\0}\oplus\ggg_{+1})$-module. Define Kac module：
$$K_{\chi}(\lambda)=\text{Ind}_{\ggg_{\0}\oplus\ggg_{+1}}^{\ggg}(L_{\chi}^0(\lambda)).$$

\begin{prop}
Let $\chi$ correspond to a Jordan matrix. Then $K_{\chi}(\lambda)$ has a unique maximal submodule, $\lambda\in\Lambda_{\chi}$. Denote by $L_{\chi}(\lambda)$ the simple quotient of $K_{\chi}(\lambda)$. Each simple $U_{\chi}(\ggg)$-module is isomorphic to one $L_{\chi}(\lambda)$ for some $\lambda\in\Lambda_{\chi}$.
\end{prop}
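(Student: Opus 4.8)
The plan is to transfer to $\ggg$ the structure theory of the baby Verma modules over the even part $\ggg_{\0}\cong\sssl(n)$ --- namely that $Z_\chi^0(\mu)$ has $L_\chi^0(\mu)$ as its unique simple quotient and that every simple $U_\chi(\ggg_{\0})$-module is isomorphic to some $L_\chi^0(\mu)$, $\mu\in\Lambda_\chi$ (\cite[10.2]{J98}) --- with the Kac modules $\text{Ind}_{\ppp}^{\ggg}(-)$, $\ppp:=\ggg_{\0}\oplus\ggg_{+1}$, taking the place of the baby Vermas; the tool that makes this work is the $\bbz$-grading on $U(\ggg)$ recalled above. First I would set up the graded picture. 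By the PBW basis above, $K_\chi(\lambda)=U_\chi(\ggg_{-1})\otimes L_\chi^0(\lambda)$ as a vector space; because the generators $x^p-x^{[p]}-\chi(x)^p$ ($x\in\ggg_{\0}$) of $I_\chi$ are homogeneous of degree $0$, the $\bbz$-grading of $U(\ggg)$ descends to $U_\chi(\ggg)$ and makes $K_\chi(\lambda)=\bigoplus_{d=-N}^{0}K_\chi(\lambda)_d$ a graded module, $N:=\dim\ggg_{-1}$, with $\ggg_{\0}$ preserving and $\ggg_{\pm1}$ shifting the degree by $\pm1$. Two facts follow at once: (i) the top piece $K_\chi(\lambda)_0=1\otimes L_\chi^0(\lambda)$ is, as a $U_\chi(\ggg_{\0})$-module, the simple module $L_\chi^0(\lambda)$, and $\ggg_{+1}$ acts by $0$ on it; (ii) $\ggg_{-1}K_\chi(\lambda)=\bigoplus_{d<0}K_\chi(\lambda)_d$, and more generally $\ggg_{-1}^{\,k}K_\chi(\lambda)=\bigoplus_{d\le -k}K_\chi(\lambda)_d$, which is $0$ once $k>N$ because $U_\chi(\ggg_{-1})\cong\bwedge\ggg_{-1}$.

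The heart of the matter is the claim: a submodule $M\subseteq K_\chi(\lambda)$ with $\pi(M)\ne0$ must equal $K_\chi(\lambda)$, where $\pi$ denotes the projection onto $K_\chi(\lambda)_0$ along $\bigoplus_{d<0}K_\chi(\lambda)_d$. Since $\pi$ commutes with the degree-$0$ action of $\ggg_{\0}$, the image $\pi(M)$ is a nonzero $U_\chi(\ggg_{\0})$-submodule of the simple module $K_\chi(\lambda)_0$, hence all of it; by (ii) this gives $K_\chi(\lambda)=M+\ggg_{-1}K_\chi(\lambda)$. Applying $\ggg_{-1}\cdot(-)$ to this identity and absorbing $\ggg_{-1}M\subseteq M$ telescopes it to $K_\chi(\lambda)=M+\ggg_{-1}^{\,k}K_\chi(\lambda)$ for every $k\ge1$, and $k>N$ forces $M=K_\chi(\lambda)$. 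It follows that the sum $\mathrm{rad}\,K_\chi(\lambda)$ of all submodules $M$ with $\pi(M)=0$ is again annihilated by $\pi$ (linearity of $\pi$), hence is a proper submodule, while by the claim every proper submodule is of this form; so $\mathrm{rad}\,K_\chi(\lambda)$ is the unique maximal submodule and $L_\chi(\lambda):=K_\chi(\lambda)/\mathrm{rad}\,K_\chi(\lambda)$ is its unique simple quotient.

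For the classification, let $S$ be a simple $U_\chi(\ggg)$-module; it is finite-dimensional by \cite[Proposition~2.3]{WZ09}. Each element of $\ggg_{+1}$ is odd, hence squares to $0$ in $U(\ggg)$, so $\ggg_{+1}$ generates the finite-dimensional algebra $\bwedge\ggg_{+1}$, whose augmentation ideal is nilpotent; therefore $S^{\ggg_{+1}}:=\{s\in S:\ggg_{+1}s=0\}$ is nonzero. It is stable under $\ggg_{\0}$ (as $[\ggg_{\0},\ggg_{+1}]\subseteq\ggg_{+1}$) and carries the $p$-character $\chi$, so it contains a simple $U_\chi(\ggg_{\0})$-submodule $T$; by the classification over $\ggg_{\0}$, $T\cong L_\chi^0(\mu)$ for some $\mu\in\Lambda_\chi$, and since $\ggg_{+1}$ kills $T$ this is an isomorphism of $\ppp$-modules. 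By Frobenius reciprocity $\Hom_{\ggg}(K_\chi(\mu),S)\cong\Hom_{\ppp}(L_\chi^0(\mu),S)$, which is now nonzero; since $S$ is simple the corresponding homomorphism $K_\chi(\mu)\to S$ is surjective, so $S$ is a simple quotient of $K_\chi(\mu)$ and thus $S\cong L_\chi(\mu)$ by the uniqueness established above.

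I expect the claim in the middle paragraph to be the step requiring the most care. Since $\hhh$ lies inside $\ggg_{\0}\cong\sssl(n)$, the $\bbz$-degree of a homogeneous element is not visible in its $\hhh$-weight, so submodules of $K_\chi(\lambda)$ need not be graded, and the usual ``restrict to weight spaces'' reduction of the highest-weight argument is not available; the telescoping over $\ggg_{-1}$, combined with $\ggg_{-1}^{\,N+1}=0$, is exactly what replaces it. The surrounding bookkeeping --- that the grading descends to $U_\chi(\ggg)$ and to $K_\chi(\lambda)$, and that $\ggg_{-1}$ acts freely so that (ii) holds --- is routine, and all the input concerning $\ggg_{\0}$ is taken from \cite[10.2]{J98}.
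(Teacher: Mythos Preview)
Your proof is correct. The paper argues the uniqueness of the maximal submodule differently: it restricts $K_\chi(\lambda)$ to the subalgebra $\ggg_{\0}\oplus\ggg_{-1}$, notes that over this subalgebra it is the induced module $U_\chi(\ggg_{\0}\oplus\ggg_{-1})\otimes_{U_\chi(\ggg_{\0})}L_\chi^0(\lambda)$, and uses Frobenius reciprocity to get $\dim\Hom_{\ggg_{\0}\oplus\ggg_{-1}}(K_\chi(\lambda),L_\chi^0(\lambda))=1$; from this it asserts that $N=\ggg_{-1}U_\chi(\ggg_{-1})L_\chi^0(\lambda)$ is the unique maximal $(\ggg_{\0}\oplus\ggg_{-1})$-submodule, and then observes that two distinct maximal $\ggg$-submodules would both lie in $N$ yet sum to all of $K_\chi(\lambda)$. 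You bypass the intermediate subalgebra and work directly with the $\bbz$-grading via a Nakayama-type telescoping, which makes the passage from ``$\pi(M)\neq 0$'' to ``$M=K_\chi(\lambda)$'' completely explicit; this is in fact the content the paper is tacitly using in its step from $\dim\Hom=1$ to ``$N$ is the unique maximal''. What your route buys is self-containment and clarity at that particular step; what the paper's route buys is brevity, at the cost of leaving that step to the reader. You also supply a full proof of the second assertion (every simple $U_\chi(\ggg)$-module is an $L_\chi(\lambda)$), which the paper does not prove, deferring instead to \cite{Du16}; your argument via $S^{\ggg_{+1}}\neq 0$ and Frobenius reciprocity for $\ppp\subset\ggg$ is the standard one and is correct.
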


\begin{proof}
This proof is a modification of the proof given in \cite[3.9]{Du16}. As $U_{\chi}(\ggg_{\0}\oplus\ggg_{-1})$-module, $K_{\chi}(\lambda)\cong U_{\chi}(\ggg_{\0}\oplus\ggg_{-1})\otimes_{U_{\chi}(\ggg_{\0})}L_{\chi}^0(\lambda)$.
According to Frobenius reciprocity,
$$\Hom_{U_{\chi}(\ggg_{\0}\oplus\ggg_{-1})}(K_{\chi}(\lambda), L_{\chi}^0(\lambda))\cong \Hom_{U_{\chi}(\ggg_{\0})}(L_{\chi}^0(\lambda), L_{\chi}^0(\lambda)).$$
Since dim$\Hom_{U_{\chi}(\ggg_{\0})}(L_{\chi}^0(\lambda), L_{\chi}^0(\lambda))=1$, dim$\Hom_{U_{\chi}(\ggg_{\0}\oplus\ggg_{-1})}(K_{\chi}(\lambda), L_{\chi}^0(\lambda))$=1. So $\phi\in \Hom_{U_{\chi}(\ggg_{\0}\oplus\ggg_{-1})}(K_{\chi}(\lambda), L_{\chi}^0(\lambda))$ satisfies $\phi(xv)=ax\cdot v$ for some $a\in \K$. Here $x\in U_{\chi}(\ggg_{\0}\oplus\ggg_{-1}), v\in L_{\chi}^0(\lambda)$. So as $U_{\chi}(\ggg_{\0}\oplus\ggg_{-1})$-module, $K_{\chi}(\lambda)$ has a unique maximal submodule $N=\ggg_{-1}U_{\chi}(\ggg_{-1})L_{\chi}^0(\lambda)$.

Suppose $M$ and $M'$ are two different maximal $U_{\chi}(\ggg)$-submodules of $K_{\chi}(\lambda)$. Then $M$ and $M'$ are proper $U_{\chi}(\ggg_{\0}\oplus\ggg_{-1})$-submodules of $K_{\chi}(\lambda)$. So $K_{\chi}(\lambda)=M+M'\subset N$, a contradiction. Therefore, $K_{\chi}(\lambda)$ has a unique maximal $U_{\chi}(\ggg)$-submodule.
\end{proof}



For $\lambda=\sum\limits_{i=1}^n\lambda_i\epsilon_i\in\hhh^*$, set $$\delta(\lambda)=\prod\limits_{i<j}(\lambda_i-\lambda_j+j-i-1)\in\K.$$
Suppose $\delta(\lambda)\neq 0$. Then $\lambda$ is called a typical weight. Otherwise, it is called an atypical weight. Then we have the following proposition:
\begin{prop}(Refer to \cite[Theorem 5.5]{Z24})\label{delta}
Suppose $\delta(\lambda)\neq 0$. Then $K_{\chi}(\lambda)$ is a irreducible $U_{\chi}(\ggg)$-module.
\end{prop}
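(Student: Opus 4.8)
The plan is to prove that $K_{\chi}(\lambda)$ admits no nonzero proper $U_{\chi}(\ggg)$-submodule. The starting point is the PBW picture of the Kac module. Since $\ggg_{-1}$ is purely odd and abelian, $U_{\chi}(\ggg_{-1})$ is the exterior algebra $\bigwedge^{\bullet}\ggg_{-1}$, and since $\ggg_{+1}$ annihilates $L_{\chi}^0(\lambda)$ we have, as $U_{\chi}(\ggg_{\0})$-modules,
$$K_{\chi}(\lambda)\;\cong\;U_{\chi}(\ggg_{-1})\otimes_{\K}L_{\chi}^0(\lambda),$$
which is $\bbz$-graded by the exterior degree, concentrated in degrees $0,-1,\dots,-N$ with $N=\dim\ggg_{-1}$; write $K_{\chi}(\lambda)_{-j}$ for the degree $-j$ summand, so $K_{\chi}(\lambda)_{0}=L_{\chi}^0(\lambda)$ and $K_{\chi}(\lambda)_{-N}=\bigwedge^{N}\ggg_{-1}\otimes L_{\chi}^0(\lambda)$. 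In this picture each $y\in\ggg_{-1}$ acts by left exterior multiplication, and $\ggg_{+1}$ acts by operators of degree $+1$. Finally, $\bigwedge^{N}\ggg_{-1}$ is a one-dimensional $\ggg_{\0}$-module, and since $\ggg_{\0}\cong\mathfrak{sl}(n)$ has no nontrivial one-dimensional module it is the trivial one; hence $K_{\chi}(\lambda)_{-N}\cong L_{\chi}^0(\lambda)$ as $U_{\chi}(\ggg_{\0})$-modules.

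First I would show, with no use of typicality, that every nonzero $U_{\chi}(\ggg)$-submodule $M\subseteq K_{\chi}(\lambda)$ contains the bottom summand $K_{\chi}(\lambda)_{-N}$. Because $\ggg_{-1}$ acts nilpotently ($\bigwedge^{N+1}\ggg_{-1}=0$), the fixed space $M^{\ggg_{-1}}=\{m\in M:\ggg_{-1}m=0\}$ is nonzero; from the exterior-multiplication description $K_{\chi}(\lambda)^{\ggg_{-1}}=K_{\chi}(\lambda)_{-N}$, so $M^{\ggg_{-1}}\subseteq K_{\chi}(\lambda)_{-N}$; and $M^{\ggg_{-1}}$ is a $\ggg_{\0}$-submodule since $[\ggg_{\0},\ggg_{-1}]\subseteq\ggg_{-1}$. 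As $K_{\chi}(\lambda)_{-N}\cong L_{\chi}^0(\lambda)$ is simple, $M^{\ggg_{-1}}=K_{\chi}(\lambda)_{-N}$, i.e. $K_{\chi}(\lambda)_{-N}\subseteq M$.

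It then suffices to show that, when $\delta(\lambda)\neq 0$, the submodule $U_{\chi}(\ggg)\cdot K_{\chi}(\lambda)_{-N}$ contains $K_{\chi}(\lambda)_{0}$: granting this, every nonzero $M$ contains $K_{\chi}(\lambda)_{0}=L_{\chi}^0(\lambda)$, hence $M\supseteq U_{\chi}(\ggg)\,L_{\chi}^0(\lambda)=K_{\chi}(\lambda)$, which is the assertion. Now $U_{\chi}(\ggg)\cdot K_{\chi}(\lambda)_{-N}=U_{\chi}(\ggg_{+1})\cdot K_{\chi}(\lambda)_{-N}$, and only its degree-$N$ part — the image of $\bigwedge^{N}\ggg_{+1}$ acting on $K_{\chi}(\lambda)_{-N}$ — lies in $K_{\chi}(\lambda)_{0}$; this image is a $\ggg_{\0}$-submodule of the simple module $K_{\chi}(\lambda)_{0}$, hence is $0$ or everything, so it is enough to produce one nonzero value. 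I would fix a highest weight vector $v^{+}\in L_{\chi}^0(\lambda)$ (of weight $\lambda$, with $\nnn_0v^{+}=0$), set $w:=\bigl(\bigwedge_{i<j}X_{-\epsilon_i-\epsilon_j}\bigr)\otimes v^{+}\in K_{\chi}(\lambda)_{-N}$, and compute $Z\cdot w$ for $Z=\prod_{i<j}X_{\epsilon_i+\epsilon_j}\in U_{\chi}(\ggg_{+1})$ taken in a carefully chosen order. Since $Z$ has degree $N$ and $\hhh$-weight $\sum_{i<j}(\epsilon_i+\epsilon_j)$, the element $Z\cdot w$ lies in $K_{\chi}(\lambda)_{0}=L_{\chi}^0(\lambda)$ and has weight $\lambda$, so $Z\cdot w=c\,v^{+}$ for a scalar $c$. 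Unwinding the PBW reduction — each factor $X_{\epsilon_i+\epsilon_j}$ either pairs with its partner through $\{X_{\epsilon_i+\epsilon_j},X_{-\epsilon_i-\epsilon_j}\}=-H_{\epsilon_i-\epsilon_j}$, which acts on a weight-$\mu$ vector by $-(\mu_i-\mu_j)$, or pairs with some $X_{-\epsilon_k-\epsilon_l}$ sharing exactly one index, leaving behind an even root vector of $\mathfrak{sl}(n)$ — one finds $c$ equals a nonzero scalar multiple of $\prod_{i<j}(\lambda_i-\lambda_j+j-i-1)=\delta(\lambda)$. Thus $Z\cdot w\neq 0$ precisely when $\delta(\lambda)\neq 0$, which is what is needed. (This final step is in essence \cite[Theorem 5.5]{Z24}.)

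The reductions in the first two paragraphs are routine; the substantial work, and the main obstacle, is the last computation of the scalar $c$. The delicate point is that the cross contractions — where $X_{\epsilon_i+\epsilon_j}$ is forced to pair with a non-matching $X_{-\epsilon_k-\epsilon_l}$, leaving an $\mathfrak{sl}(n)$-root vector that then acts back toward $v^{+}$ — do contribute to $c$, and one must check that, together with the $\rho_0$-type shifts produced by commuting Cartan elements past the odd root vectors still in play, everything collapses to a nonzero multiple of $\delta(\lambda)$ rather than to an expression that could vanish for a typical $\lambda$. Ordering the product $\prod_{i<j}X_{\epsilon_i+\epsilon_j}$ appropriately — for instance by the value of $j-i$ — is what makes this accounting manageable; the case $n=2$, where there is a single factor, no cross term, and $c=-(\lambda_1-\lambda_2)=-\delta(\lambda)$, already exhibits the mechanism.
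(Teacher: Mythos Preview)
The paper gives no proof here; the statement is quoted from \cite[Theorem~5.5]{Z24}. Your overall strategy --- every nonzero submodule contains the bottom graded piece $K_{\chi}(\lambda)_{-N}$, and irreducibility then reduces to showing that some degree-$N$ element of $U_{\chi}(\ggg_{+1})$ carries the bottom to something nonzero in the top --- is the standard one, and the reductions in your first two paragraphs are correct.

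The gap is in the setup of the final step. You argue that $Z\cdot w$ has $\hhh$-weight $\lambda$, ``so $Z\cdot w=c\,v^{+}$ for a scalar $c$''. But the $\lambda$-weight space of $L_{\chi}^{0}(\lambda)$ is not one-dimensional in general. For instance, when $\chi$ is regular nilpotent one has $L_{\chi}^{0}(\lambda)=Z_{\chi}^{0}(\lambda)$, and for $\ggg_{\0}=\mathfrak{sl}(3)$ the $\lambda$-weight space of $Z_{\chi}^{0}(\lambda)$ already has dimension $p$ (the PBW monomials $X_{-\epsilon_{1}+\epsilon_{2}}^{a}X_{-\epsilon_{2}+\epsilon_{3}}^{b}X_{-\epsilon_{1}+\epsilon_{3}}^{c}v^{+}$ with $a=b=k$ and $c\equiv -k\pmod p$ all have weight $\lambda$); the typical weight $\lambda=(p-1,p-1)$ lands in exactly this situation. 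So the scalar $c$ is not well-defined as you have written it: the cross contractions you describe will in general produce genuine $U_{\chi}(\nnn_{0}^{-})$-terms in $Z\cdot w$, not merely numerical shifts of a coefficient in front of $v^{+}$. To close the argument you must either track $Z\cdot w$ as a full element of $L_{\chi}^{0}(\lambda)$ and prove it nonzero directly, or first establish that $Z\cdot w$ is a $\nnn_{0}$-maximal vector (which does \emph{not} follow automatically from $\nnn_{0}w=0$, since $[\nnn_{0},Z]\neq 0$). This bookkeeping is presumably what the cited argument in \cite{Z24} handles.
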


We call $\chi\in\ggg_{\0}^*$ is regular semisimple if there exists $g\in \GL(n)$ such that $g\cdot\chi(\nnn_0\oplus\nnn_0)=0$ and
$g\cdot\chi(H_{\epsilon_i-\epsilon_j})\neq 0$ for $1\leq i<j \leq n$.
We call $\chi\in\ggg_{\0}^*$ is regular nilpotent if there exists $g\in \GL(n)$ such that $g\cdot\chi(\hhh_0\oplus\nnn_0)=0$ and
$g\cdot\chi(X_{\alpha})\neq 0$ for all simple roots $\alpha$.

\begin{prop}\label{delta1}
(Refer to \cite[Porpositon 2.7]{RB23}) Suppose $\chi$ is regular semisimple. Then $K_{\chi}(\lambda)$ is irreducible.
\end{prop}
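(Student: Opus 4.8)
The plan is to recognize $K_\chi(\lambda)$ as a baby Verma module for $\ggg$ and then prove irreducibility by a socle argument combined with a Shapovalov-type non-vanishing.

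First, since $U_\chi(\ggg)\cong U_{g\cdot\chi}(\ggg)$ for $g\in\GL(n)$, after replacing $\chi$ by a conjugate we may assume $\chi(\nnn_0^-)=\chi(\nnn_0)=0$ and $\chi(H_{\epsilon_i-\epsilon_j})\neq 0$ for all $1\le i<j\le n$, so $\chi$ corresponds to a regular semisimple element of $\ggg_{\0}\cong\sssl(n)$. By the theory of reduced enveloping algebras of reductive Lie algebras with semisimple $p$-character (see \cite{J98}), $U_\chi(\ggg_{\0})$ is semisimple and the baby Verma $Z_\chi^0(\lambda)$ is already irreducible, so $L_\chi^0(\lambda)=Z_\chi^0(\lambda)=\text{Ind}_{\bbb_0}^{\ggg_{\0}}\K_\lambda$. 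Because $\ggg_{+1}$ is an abelian ideal of $\ggg_{\0}\oplus\ggg_{+1}$ acting trivially on $\K_\lambda$, the module $\text{Ind}_{\bbb_0\oplus\ggg_{+1}}^{\ggg_{\0}\oplus\ggg_{+1}}\K_\lambda$ again has $\ggg_{+1}$ acting trivially and is isomorphic to $L_\chi^0(\lambda)$ as a $(\ggg_{\0}\oplus\ggg_{+1})$-module. Transitivity of induction then gives
$$K_\chi(\lambda)=\text{Ind}_{\ggg_{\0}\oplus\ggg_{+1}}^{\ggg}L_\chi^0(\lambda)\;\cong\;\text{Ind}_{\bbb_0\oplus\ggg_{+1}}^{\ggg}\K_\lambda\;=\;\text{Ind}_{\hhh\oplus\nnn}^{\ggg}\K_\lambda\;=:\;V_\chi(\lambda),$$
the baby Verma module for $\ggg$ of highest weight $\lambda$ for the Borel subalgebra $\hhh\oplus\nnn$.

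Next I set up the socle argument. Put $\nnn^-=\nnn_0^-\oplus\ggg_{-1}$; since $\chi$ vanishes on $\nnn^-\cap\ggg_{\0}$ we have $U_\chi(\nnn^-)=U_0(\nnn^-)$, a finite-dimensional local Frobenius superalgebra, and by PBW $V_\chi(\lambda)\cong U_\chi(\nnn^-)$ as a $U_\chi(\nnn^-)$-module. Hence its $U_\chi(\nnn^-)$-socle is one-dimensional, spanned by a lowest weight vector $v^-$ which up to a scalar equals $\big(\prod_{\alpha\in\Phi_0^+}X_{-\alpha}^{\,p-1}\big)\big(\prod_{i<j}X_{-\epsilon_i-\epsilon_j}\big)v_\lambda$, of weight $\lambda-(p-1)2\rho_0$. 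Any nonzero $U_\chi(\ggg)$-submodule is a nonzero $U_\chi(\nnn^-)$-submodule, hence has nonzero socle, hence contains $v^-$. So it suffices to show that $v^-$ generates $V_\chi(\lambda)$ over $U_\chi(\ggg)$; as $v_\lambda$ generates $V_\chi(\lambda)$ and $U_\chi(\ggg)v^-=U_\chi(\nnn^-)U_\chi(\hhh)U_\chi(\nnn)\,v^-$, it is enough to prove that the top monomial
$$E:=\Big(\prod_{\alpha\in\Phi_0^+}X_{\alpha}^{\,p-1}\Big)\Big(\prod_{i<j}X_{\epsilon_i+\epsilon_j}\Big)\in U_\chi(\nnn)$$
satisfies $E\cdot v^-=c\,v_\lambda$ with $c\neq0$.

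This last point is the computational heart of the matter. One expands $E\cdot v^-$ in $U_\chi(\ggg)$ and extracts the component of $\ggg_{-1}$-degree zero, using the $\bbz$-grading on $U(\ggg)$ to reduce to the associated graded, where the computation decouples. The leading coefficient factors through two pairings: the ``odd'' pairing is the bracket $\ggg_{+1}\times\ggg_{-1}\to\ggg_{\0}$ on the relevant weight spaces, whose diagonal part is $\prod_{i<j}[X_{\epsilon_i+\epsilon_j},X_{-\epsilon_i-\epsilon_j}]=\prod_{i<j}(-H_{\epsilon_i-\epsilon_j})$, acting on $v_\lambda$ by a nonzero scalar precisely because $\chi$ is regular at every coroot $H_{\epsilon_i-\epsilon_j}$; the ``even'' pairing is the $\sssl(n)$ Shapovalov contribution $\big(\prod_{\alpha\in\Phi_0^+}X_{\alpha}^{p-1}\big)\big(\prod_{\alpha\in\Phi_0^+}X_{-\alpha}^{p-1}\big)v_\lambda\in\K v_\lambda$, which is nonzero because $Z_\chi^0(\lambda)$ is irreducible. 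The point I expect to be hardest is to control the off-diagonal matchings in the odd pairing and show that they do not cancel the diagonal term, i.e. that the full non-commutative determinant of the pairing matrix is invertible on $Z_\chi^0(\lambda)$; a natural approach is induction on $n$, removing the index $n$ together with the row and column of the $X_{\pm(\epsilon_i+\epsilon_n)}$, or a direct super-Shapovalov determinant computation. Once $c\neq0$ is known, $v^-$ generates $V_\chi(\lambda)=K_\chi(\lambda)$, so the latter is irreducible.
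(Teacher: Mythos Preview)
The paper does not actually prove this proposition; it merely cites \cite[Proposition~2.7]{RB23}. That said, within the paper's own framework there is a one-line argument you have overlooked: regular semisimplicity forces every $\lambda\in\Lambda_\chi$ to be typical, so Proposition~\ref{delta} applies. Indeed, for $h=H_{\epsilon_i-\epsilon_j}$ one has $h^{[p]}=h$, so $\lambda(h)^p-\lambda(h)=\chi(h)^p\neq 0$ and hence $\lambda_i-\lambda_j\notin\bbf_p$ for all $i<j$. Since $j-i-1\in\bbf_p$, each factor $\lambda_i-\lambda_j+j-i-1$ is nonzero, giving $\delta(\lambda)\neq 0$.

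Your route is not wrong in spirit, but it is both incomplete and redundant. The ``computational heart'' you describe --- showing $E\cdot v^-$ is a nonzero multiple of $v_\lambda$ --- is precisely the content of Proposition~\ref{delta} (equivalently \cite[Theorem~5.5]{Z24}), and you explicitly leave it undone. Two specific issues: first, the weight you write for $v^-$ is missing the odd contribution $\sum_{i<j}(\epsilon_i+\epsilon_j)$. Second, and more substantively, your ``diagonal part'' $\prod_{i<j}(-H_{\epsilon_i-\epsilon_j})$ evaluated at $v_\lambda$ gives $\prod_{i<j}(\lambda_j-\lambda_i)$, which is \emph{not} the correct Shapovalov factor; the actual answer involves $\rho$-type shifts and yields exactly $\delta(\lambda)=\prod_{i<j}(\lambda_i-\lambda_j+j-i-1)$. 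For regular semisimple $\chi$ both products happen to be nonzero, so your heuristic lands on the right conclusion, but the argument as written does not establish $c\neq 0$, and completing it amounts to reproving the typicality criterion you already have available.
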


Note that the Weyl group $W$ is isomorphic to the $n$-symmetric group $S_n$. For $w\in W$, let $w=(i_1,i_2,...,i_n)$ if $w(\sum\limits_{k=1}^n x_i\epsilon_i)=\sum\limits_{i=1}^n x_{i_k}\epsilon_{i}$, $1\leq k,i_k\leq n$.
Then a direct computation shows
$$\delta(w\cdot \lambda)=\prod\limits_{k<s}(x_{i_k}-x_{i_s}+i_s-i_k-1).$$
\begin{prop}\label{delta2}
Let $\chi$ be regular nilpotent and $\lambda,\mu\in \Lambda_{\chi}$. Suppose there exists $w\in W$ such that $\delta(w\cdot\lambda)\neq 0$. Then $K_{\chi}(\lambda)$ is a irreducible $U_{\chi}(\ggg)$-module.
\end{prop}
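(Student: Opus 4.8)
The plan is to deduce the statement from Proposition~\ref{delta} by showing that, when $\chi$ is regular nilpotent, the Kac module $K_\chi(\nu)$ depends on $\nu\in\Lambda_\chi$ only through its orbit under the dot action of $W$. Granting $K_\chi(\lambda)\cong K_\chi(w\cdot\lambda)$, the hypothesis $\delta(w\cdot\lambda)\neq 0$ and Proposition~\ref{delta} give that $K_\chi(w\cdot\lambda)$ is irreducible, hence so is $K_\chi(\lambda)$. One should first check that $w\cdot\lambda$ again lies in $\Lambda_\chi$: after replacing $\chi$ by a $\GL(n)$-conjugate we may assume $\chi(\hhh\oplus\nnn_0)=0$, so $\Lambda_\chi=\Lambda_0=\{\nu\in\hhh^*\mid \nu(h)^p=\nu(h^{[p]})\}$; since $\nu\mapsto w\nu$ preserves $\Lambda_0$ ($w$ being a restricted automorphism of $\hhh$) and $w\cdot\lambda-w\lambda=w\rho_0-\rho_0$ lies in the root lattice $\bbz\Phi_0\subseteq\Lambda_0$, we get $w\cdot\Lambda_\chi=\Lambda_\chi$. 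Note also that such a $\chi$ corresponds under the trace form to a single nilpotent Jordan block, so $K_\chi(\nu)$ is defined for all $\nu\in\Lambda_\chi$ and Proposition~\ref{delta} applies to it.

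To obtain $K_\chi(\lambda)\cong K_\chi(w\cdot\lambda)$ it suffices to produce an isomorphism $L_\chi^0(\lambda)\cong L_\chi^0(w\cdot\lambda)$ of $U_\chi(\ggg_{\0})$-modules. Indeed, $\ggg_{+1}$ acts by $0$ on both of these simple $U_\chi(\ggg_{\0})$-modules, so any such isomorphism is automatically $U_\chi(\ggg_{\0}\oplus\ggg_{+1})$-linear, and applying the exact functor $\text{Ind}_{\ggg_{\0}\oplus\ggg_{+1}}^{\ggg}$ yields $K_\chi(\lambda)\cong K_\chi(w\cdot\lambda)$ by the very definition of the Kac modules. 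Thus the whole proposition is reduced to a purely even assertion about $\ggg_{\0}\cong\mathfrak{sl}(n)$ equipped with a regular nilpotent $p$-character.

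For that even assertion I would invoke the structure theory of $U_\chi(\ggg_{\0})$ for regular nilpotent $\chi$, which is the extreme case of a $p$-character of standard Levi form (taken with respect to the full set of simple roots); cf.\ \cite{J98}. Two facts enter. First, for such $\chi$ every baby Verma module $Z_\chi^0(\nu)$, $\nu\in\Lambda_\chi$, is already simple, so $L_\chi^0(\nu)=Z_\chi^0(\nu)$. Second, the linkage principle in this setting says $Z_\chi^0(\lambda)\cong Z_\chi^0(w\cdot\lambda)$ for every $w\in W$: the two baby Verma modules have the same Harish-Chandra central character (central characters being constant on dot-orbits of $W$), and for regular nilpotent $\chi$ each block of $U_\chi(\ggg_{\0})$ contains a unique simple module. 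Combining the two facts gives $L_\chi^0(\lambda)\cong L_\chi^0(w\cdot\lambda)$, and the reduction above then completes the argument.

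The step carrying the real weight is this last one: one must make sure the linkage relevant here is the ordinary finite dot action of $W$ — precisely the feature distinguishing regular nilpotent $\chi$ from $\chi=0$ — and match the normalizations ($\rho_0$-shift, the identification of $\Lambda_\chi$ with restricted weights, and the compatibility of $w$ with $\Lambda_\chi$) with those of the source being cited. An alternative that sidesteps quoting the block structure is to twist by $\Ad(\dot w)$ for a representative $\dot w\in\mathrm{SL}(n)$ of $w$ and then re-conjugate by an element of $\mathrm{SL}(n)$ carrying $\dot w\cdot\chi$ back to $\chi$ (all regular nilpotents being $\mathrm{SL}(n)$-conjugate); since $\mathrm{SL}(n)$ acts on $\p(n)$ preserving the grading $\ggg_{-1}\oplus\ggg_{\0}\oplus\ggg_{+1}$, such twists send Kac modules to Kac modules. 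But identifying the highest weight of the twisted module as $w\cdot\lambda$ then requires showing that inner twists by elements of the connected stabilizer of $\chi$ fix the simple $U_\chi(\ggg_{\0})$-modules, which is essentially the same difficulty in a different form.
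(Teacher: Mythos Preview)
Your proposal is correct and follows essentially the same route as the paper: reduce to $K_\chi(\lambda)\cong K_\chi(w\cdot\lambda)$ via $L_\chi^0(\lambda)\cong L_\chi^0(w\cdot\lambda)$ for regular nilpotent $\chi$, then invoke Proposition~\ref{delta}. The paper compresses the even-case isomorphism into a single citation of \cite[C.3]{J04}, whereas you spell out the underlying facts (simplicity of baby Vermas and the dot-orbit linkage for standard Levi form), but the argument is the same.
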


\begin{proof}
By \cite[C.3]{J04}, $L^0_{\chi}(\lambda)\cong L^0_{\chi}(w\cdot\mu)$. So $K_{\chi}(\lambda)\cong K_{\chi}(w\cdot\lambda)$. By Proposition \ref{delta}, $K_{\chi}(w\cdot\lambda)$ is irreducible since $\delta(w\cdot\lambda)\neq 0$. Thus, $K_{\chi}(\lambda)$ is also irreducible.
\end{proof}

\begin{prop}\label{delta3}
Let $\chi$ be regular nilpotent and $\lambda\in\Lambda_{\chi}$. Suppose $p\geq n+1$. Then $K_{\chi}(\lambda)$ is a irreducible $U(\ggg)$-module.
\end{prop}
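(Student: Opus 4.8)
The plan is to deduce this from Proposition~\ref{delta2}. Since $K_\chi(\lambda)$ is a $U_\chi(\ggg)$-module and $U_\chi(\ggg)$ is a quotient of $U(\ggg)$, it is irreducible over $U(\ggg)$ exactly when it is irreducible over $U_\chi(\ggg)$; so by Proposition~\ref{delta2} it is enough to produce, for each $\lambda\in\Lambda_\chi$, a $w\in W$ with $\delta(w\cdot\lambda)\neq 0$. As $\chi$ is regular nilpotent we may, passing to a suitable $\GL(n)$-conjugate, assume $\chi(\hhh\oplus\nnn_0)=0$. Then for $h\in\hhh$ the defining relation of $\Lambda_\chi$ becomes $\lambda(h)^p=\lambda(h^{[p]})$; applied to the basis elements $H_{\epsilon_i-\epsilon_{i+1}}$, which are toral (their $p$-th powers as matrices equal themselves, $p$ being odd), so that $H_{\epsilon_i-\epsilon_{i+1}}^{[p]}=H_{\epsilon_i-\epsilon_{i+1}}$, it gives $\lambda(H_{\epsilon_i-\epsilon_{i+1}})\in\bbf_p$ for all $i$. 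Lifting $\lambda$ to a weight $(\lambda_1,\dots,\lambda_n)$ of $\gl(n)$, which is harmless because $\delta$ and the dot action depend only on the differences $\lambda_i-\lambda_j$, I may and do take all $\lambda_i\in\bbf_p\subset\K$.

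The combinatorial heart of the matter is this. Set $a_j=\lambda_j-j\in\bbf_p$. For $w=(i_1,\dots,i_n)\in W$, the formula for $\delta(w\cdot\lambda)$ recorded above rewrites as
\[
\delta(w\cdot\lambda)=\prod_{k<s}\bigl(\lambda_{i_k}-\lambda_{i_s}+i_s-i_k-1\bigr)=\prod_{k<s}\bigl(a_{i_k}-a_{i_s}-1\bigr),
\]
so $\delta(w\cdot\lambda)\neq 0$ if and only if $a_{i_k}\neq a_{i_s}+1$ whenever $k<s$. It thus remains to show: any list $a_1,\dots,a_n\in\bbf_p$ can be reordered as $a_{i_1},\dots,a_{i_n}$ so that no entry equals a strictly later entry plus $1$. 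Since $p\geq n+1$, the set $\{a_1,\dots,a_n\}$ omits some value $c_0\in\bbf_p$; writing each $x\in\bbf_p\setminus\{c_0\}$ uniquely as $x=c_0+r$ with $r\in\{1,\dots,p-1\}$ and listing the $a_j$ in nondecreasing order of this label $r$ (ties broken arbitrarily) does the job: if $a_{i_k}$ precedes $a_{i_s}$ yet $a_{i_k}=a_{i_s}+1$, their labels would satisfy $r_k\leq r_s$ together with $r_k\equiv r_s+1\pmod p$ and $r_k,r_s\in\{1,\dots,p-1\}$, which is impossible.

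Feeding the resulting $w$ into Proposition~\ref{delta2} then gives irreducibility of $K_\chi(\lambda)$ for every $\lambda\in\Lambda_\chi$, proving the proposition. The only real content is the combinatorial step, together with the observation that regular nilpotence forces (a conjugate of) $\chi$ to vanish on $\hhh$, so that $\Lambda_\chi$ consists precisely of the $\bbf_p$-integral weights; the hypothesis $p\geq n+1$ is used exactly to ensure $\{a_1,\dots,a_n\}\neq\bbf_p$. Everything else --- the PBW basis, the dot-action formula for $\delta(w\cdot\lambda)$, the reduction between $U(\ggg)$- and $U_\chi(\ggg)$-irreducibility --- is routine bookkeeping already available above. (For $p=n$ the argument genuinely breaks down: the Steinberg-type weight then gives a reducible Kac module, consistent with the bound $p>n$ announced in the introduction.)
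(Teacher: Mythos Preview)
Your proof is correct and reaches the same reduction as the paper---namely, showing that for every $\lambda\in\Lambda_\chi$ there is a $w\in W$ with $\delta(w\cdot\lambda)\neq 0$---but you resolve the resulting combinatorics by a genuinely different, and cleaner, argument. The paper proceeds by induction on $n$: it shows that the system $\delta(w\cdot\lambda)=0$ for all $w\in W$ forces $L_k:=\prod_{i\neq k}(x_i-x_k+k-i-1)=0$ for every $k$, by factoring out $L_k$ from the equations indexed by permutations fixing $k$ and invoking the inductive hypothesis on the remaining factors; it then argues that $L_1=\cdots=L_{n-1}=0$ pins down the values $x_i-x_n-i$ in a way incompatible with $L_n=0$ when $p\geq n+1$. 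Your substitution $a_j=\lambda_j-j$ turns the question into the transparent statement that $n<p$ elements of $\bbf_p$ can always be listed so that no entry is the successor of a later one, and your ``shift so that $0$ is missing and sort'' trick dispatches this in two lines. What your approach buys is brevity and a clear conceptual picture of why $p\geq n+1$ enters (pigeonhole guarantees a gap in the cycle $\bbz/p\bbz$); what the paper's induction buys is perhaps a more structural view of how the constraints $\delta(w\cdot\lambda)=0$ interact across the Weyl group, though at the cost of a more intricate argument. Your closing parenthetical about $p=n$ is a nice heuristic but goes beyond what you have proved: the combinatorial criterion is only sufficient, so its failure does not by itself establish reducibility.
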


\begin{proof}
By Proposition \ref{delta2}, we only need to prove that there exists $w\in W$ such that $\delta(w\cdot\lambda)\neq 0$, or equivalently, there is no $\lambda\in\Lambda_{\chi}$ satisfies the system of equations $\delta(w\cdot\lambda)=0$ for all $w\in W$. Let $\lambda=\sum\limits_{i=1}^n x_i\epsilon_i\in\hhh^*$. Then $ x_i-x_j\in \bbf_{p}$.

We will prove it by induction on $n$. Firstly, it is easy to verify that the statement is true when $n=3$. There is a claim:
The system of equations $\delta(w\cdot\lambda)=0$ for all $w\in W$ is equivalent to the system of equations
$$\prod\limits_{1\leq i\leq n,i\neq k}(x_{i}-x_{k}+k-i-1)=0 \text{ for } 1\leq k\leq n.$$
The proof of the claim:
Let
\[
\bar{S_k}=\left\{(i_1,i_2,...,i_{k-1},k,i_{k+1},...,i_n)\mid
\begin{array}{cc}
(i_1,i_2,...,i_{k-1},i_{k+1},...,i_n) \text{ is a }\\
\text{ permutation of } (1,2,...,k-1,k+1,...,n)\\
\end{array}
\right\},
\]
$$L_k=\prod\limits_{1\leq i\leq n,i\neq k}(x_{i}-x_{k}+k-i-1).$$
Note that $L_{k}$ is the common monomial of $\delta(w\cdot\lambda), w\in \bar{S_k}$. By induction there is no $\lambda\in\Lambda_{\chi}$ satisfies the system of equations $\delta(w\cdot\lambda)/L_{k}=0$ for all $w\in \bar{S_k}$. So $L_k$ must be zero.

Clearly, the system of equations $L_k=0$  for $1\leq k\leq n-1$ is equivalent to
\[\begin{cases}
(x_{i_1}-x_n+n-i_1-1)=0\\
(x_{i_2}-x_{i_1}+i_1-i_2-1)(x_{i_2}-x_{n}+n-i_2-1)=0\\
(x_{i_3}-x_{i_2}+i_2-i_3-1)(x_{i_3}-x_{i_1}+i_1-i_3-1)(x_{i_3}-x_n+n-i_3-1)=0\\
\quad\vdots\\
(x_{i_{n-1}}-x_{i_{n-2}}+i_{n-2}-i_{n-1}-1)\cdots(x_{i_{n-1}}-x_{n}+n-i_{n-1}-1)=0
\end{cases}
\]
for $i_1,...,i_{n-1}$ being all the possible permutations of $1,...,n-1$.
It shows that $x_i-x_n-i\in\{-1,...,-(n-1)\}, 1\leq i\leq n-1$.
It contradicts the fact that $L_n=0$ for $p\geq n+1$.
Thus, there must exists $w\in W$ such that $\delta(w\cdot\lambda)\neq 0$ for any $\lambda\in\Lambda_{\chi}$. So in such conditions, $K_{\chi}(\lambda)$ is irreducible.
\end{proof}
\begin{remark}
Note that $p\geq n+1$ is a sufficient condition in the above proposition. And there is another proof given in \cite{RB23}.
\end{remark}

\section{The representations of $\p(3)$}
In the following part of the article, let $\ggg=\p(3)$ and $p>3$. We will investigate the character formula of the irreducible $\ggg$-module $L_{\chi}(\lambda)$, $\lambda\in\Lambda_{\chi}$. Write $\lambda=(r, s)$ if $\lambda(H_{\epsilon_1-\epsilon_2})=r, \lambda(H_{\epsilon_2-\epsilon_3})=s$. There are $p^2$  possibilities for $\lambda$. We only need to consider the $\GL(3)$-orbit of $\chi\in\ggg_0^*$. The representatives of orbits are given below (refer to \cite{XW17}):
\begin{itemize}
\item[(1)] $\chi_1$ is regular semisimple, i.e. $\chi_1(H_{\epsilon_1-\epsilon_2} )\chi_1(H_{\epsilon_2-\epsilon_3})\chi_1(H_{\epsilon_1-\epsilon_3})\neq 0$, $\chi_1(\nnn_0\oplus\nnn_0^-)=0$.

\item[(2)] $\chi_2$ is subregular semisimple, i.e. $\chi_2(H_{\epsilon_1-\epsilon_2})=0$, $\chi_2(H_{\epsilon_2-\epsilon_3})\chi_2(H_{\epsilon_1-\epsilon_3})\neq 0$, $\chi_2(\nnn_0\oplus\nnn_0^-)=0$.

\item[(3)] $\chi_3=0$.

\item[(4)] $\chi_4(H_{\epsilon_2-\epsilon_3})\neq 0$, $\chi_4(X_{-\epsilon_1+\epsilon_2})=1$, $\chi_4(\nnn_0)=\chi_4(H_{\epsilon_1-\epsilon_2})=\chi_4(X_{-\epsilon_1+\epsilon_3})=\chi_4(X_{-\epsilon_2+\epsilon_3})=0$.

\item[(5)] $\chi_5$ is subregular nilpotent, i.e. $\chi_5(H_{\epsilon_1-\epsilon_2})=\chi_5(H_{\epsilon_2-\epsilon_3})=0$, $\chi_5(X_{-\epsilon_2+\epsilon_3})=1$, $\chi_5(\nnn_0)=0=\chi_5(X_{-\epsilon_1+\epsilon_2})=\chi_5(X_{-\epsilon_1+\epsilon_3})=0$.

\item[(6)] $\chi_6$ is regular nilpotent, i.e. $\chi_6(X_{-\epsilon_1+\epsilon_2} )=\chi_6(X_{-\epsilon_2+\epsilon_3})=1$, $\chi_6(\hhh\oplus\nnn_0)=0$.
\end{itemize}

For convenience, we fix the notations of the orbits given above.
Let $\mathfrak{u}^-=\ggg_{-\epsilon_2+\epsilon_3}\oplus\ggg_{-\epsilon_1+\epsilon_3}$ and $\mathfrak{l}=\hhh\oplus\ggg_{\epsilon_1-\epsilon_2}\oplus\ggg_{\epsilon_2-\epsilon_1}\cong \mathfrak{gl}(2)$.
The corresponding irreducible $U_{\chi}(\ggg_0)$-modules are given below:
\begin{itemize}
\item[(1)]  Let $\chi=\chi_1$. Then $L^0_{\chi}(\lambda)\cong Z_{\chi}^0(\lambda)$.

\item[(2)]  Let $\chi=\chi_2$. Then $L^0_{\chi}(\lambda)\cong U_{\chi}(\mathfrak{u}^-)\otimes V_{\chi}$, where
$V_{\chi}$ is the irreducible $U_0(\mathfrak{l})$-module with highest weight $\lambda$. The irreducible $\mathfrak{l}\cong\mathfrak{gl}(2)$-modules is showed in \cite{J98}.

\item[(3)] Let $\chi=\chi_3$. Denote by $L(\lambda)$ the irreducible $\mathfrak{sl}(3,\bbc)$-module with the maximal weight $\lambda$. Denote by $\bar{L}(\lambda) (\lambda\in \Lambda_{\chi})$ the irreducible $\mathfrak{sl}(3,\K)$-module corresponding to $L(\lambda)$ constructed in \cite{B67}. Then \cite[Theorem 4]{B67} shows:
\begin{itemize}
\item[\tiny(3.1)] Suppose $r+s\leq p-2$. Then
$L_{\chi}^0(\lambda)\cong \bar{L}(\lambda)$.

\item[\tiny(3.2)] Suppose $\lambda=(p-1, 0)$ or $\lambda=(0, p-1)$. Then
$L_{\chi}^0(\lambda)\cong \bar{L}(\lambda)$.

\item[\tiny(3.3)] Suppose $r+s=p-1 (s,r\geq 1) $.
Then $L_{\chi}^0(\lambda)\cong \bar{L}(\lambda)/\bar{L}(\lambda-2\epsilon_1-\epsilon_2)$.
\end{itemize}
\item[(4)] Let $\chi=\chi_4$. Then $L^0_{\chi}(\lambda)\cong U_{\chi}(\mathfrak{u}^-)\otimes V_{\chi}$, where
$V_{\chi}$ is the irreducible $U_{\nu}(\mathfrak{l})$-module with highest weight $\lambda$, $\nu\in\mathfrak{gl}(2)^*$ is regular nilpotent.
Note that $L^0_{\chi}(\lambda)\cong L^0_{\chi}(s_{\epsilon_1-\epsilon_2}\cdot\lambda).$

\item[(5)] Let $\chi=\chi_5$. Then $L^0_{\chi}(\lambda)\cong U_{\chi}(\mathfrak{u}^-)\otimes V_{\chi}$, where
$V_{\chi}$ is the irreducible $U_{0}(\mathfrak{l})$-module with highest weight $\lambda$ (cf. \cite{J99}).

\item[(6)] Let $\chi=\chi_6$. Then $L^0_{\chi}(\lambda)\cong Z_{\chi}^0(\lambda)$.
\end{itemize}
\begin{defn}
A vector $v\in K_{\chi}(\lambda)$ is called a maximal vector of weight $\mu\in\hhh^*$ if $h\cdot v=\mu(h)v$ for any $h\in\hhh$, and $\nnn\cdot v=0$.
\end{defn}

The character formula of $K_{\chi}(\lambda)$ is clear. So to investigate the character formula of $L_{\chi}(\lambda)$, it is sufficient to investigate the composition factors of $K_{\chi}(\lambda)$. A way to investigate the composition factors of $K_{\chi}(\lambda)$ is to find all the maximal vectors of $K_{\chi}(\lambda)$.

Recall that $U(\ggg)$ is a $\bbz$-graded algebra, and so is $U_{\chi}(\ggg)$.
Let $v=u_1v_1+u_2v_2+u_3v_3$, $v_1, v_2, v_3\in L_{\chi}^0(\lambda),$ $u_1, u_2, u_3\in U(\ggg_{-1})$ with $d(u_1)=-1, d(u_2)=-2$ and $d(u_3)=-3.$
Since $K_{\chi}(\lambda)$ is $U_{\chi}(\ggg_{-1})$-free. Let $$K_{\chi}(\lambda)_{-i}=\text{Span}_{\K}\{uv|d(u)=-i, v\in L_{\chi}^0(\lambda)\}.$$
Then $K_{\chi}(\lambda)$ becomes a $\bbz$-graded $U_{\chi}(\ggg)$-module. Denote by $d(x)$ the degree of a homogeneous element $x\in K_{\chi}(\lambda)$.

\begin{prop}\label{max v}
A vector $v\in K_{\chi}(\lambda)$ is a maximal vector of weight $\mu\in\hhh^*$ if and only if $u_1v_1,u_2v_2,u_3v_3$ are maximal vectors of weight $\mu$.
\end{prop}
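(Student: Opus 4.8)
The plan is to exploit the $\bbz$-grading on $K_\chi(\lambda)$ together with the fact that the three summands $u_1v_1$, $u_2v_2$, $u_3v_3$ live in distinct graded pieces $K_\chi(\lambda)_{-1}$, $K_\chi(\lambda)_{-2}$, $K_\chi(\lambda)_{-3}$ respectively. The ``if'' direction is immediate: a sum of maximal vectors of the same weight $\mu$ is again a maximal vector of weight $\mu$, since the conditions $h\cdot v = \mu(h)v$ and $\nnn\cdot v = 0$ are linear. So the content is entirely in the ``only if'' direction.

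For the ``only if'' direction, first I would observe that each $u_iv_i$ is automatically a weight vector: if $v$ has weight $\mu$ and $v = u_1v_1 + u_2v_2 + u_3v_3$ is the decomposition into homogeneous $\bbz$-degrees $-1,-2,-3$, then applying $h\in\hhh$ commutes with the $\bbz$-grading (since $\hhh\subset\ggg_{\0} = U(\ggg)_0$), so $h\cdot(u_iv_i) = \mu(h)\,u_iv_i$ for each $i$ by comparing graded components. Hence each $u_iv_i$ has weight $\mu$. It remains to show $\nnn\cdot(u_iv_i) = 0$ for each $i$. Decompose $\nnn = \nnn_0\oplus\ggg_{+1}$, where $\nnn_0\subset U(\ggg)_0$ preserves the $\bbz$-degree and $\ggg_{+1}\subset U(\ggg)_{+1}$ raises it by one.

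For the even part $\nnn_0$: from $\nnn_0\cdot v = 0$ and the fact that $\nnn_0$ preserves $\bbz$-degree, comparing the degree-$(-1)$, $(-2)$, $(-3)$ components of $0 = x\cdot v = x\cdot u_1v_1 + x\cdot u_2v_2 + x\cdot u_3v_3$ for each root vector $x$ of $\nnn_0$ gives $\nnn_0\cdot(u_iv_i) = 0$ directly. For the odd part $\ggg_{+1}$: given $x\in\ggg_{+1}$, the element $x\cdot v = x\cdot u_1v_1 + x\cdot u_2v_2 + x\cdot u_3v_3$ has homogeneous components in degrees $0$, $-1$, $-2$ respectively. Since $x\cdot v = 0$, again comparing graded components forces $x\cdot(u_iv_i) = 0$ for each $i$ separately. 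Combining, $\nnn\cdot(u_iv_i) = 0$ for each $i$, so each $u_iv_i$ is a maximal vector of weight $\mu$ (those that are nonzero; a zero summand is vacuously maximal).

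The main point to be careful about is simply the interaction of the $\bbz$-grading with the action of $\nnn$: one needs that every element of $\hhh$ and $\nnn_0$ sits in degree $0$ and every element of $\ggg_{+1}$ sits in degree $+1$ of the graded algebra $U_\chi(\ggg)$, which is exactly how the grading was set up ($d = \sum_k c_k - \sum_j b_j$ with $\hhh, \nnn_0^\pm$ contributing no $b$'s or $c$'s). There is no real obstacle here; the proof is a routine ``compare graded components'' argument, and the only thing worth spelling out is why each $u_iv_i$ is separately a weight vector before one can even speak of its being maximal.
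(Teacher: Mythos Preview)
Your proposal is correct and follows essentially the same approach as the paper's proof: both separate $v$ into its $\bbz$-graded components and use that $\hhh,\nnn_0$ preserve degree while $\ggg_{+1}$ raises it by one, then compare graded pieces. The only cosmetic difference is that the paper phrases the $\hhh$-step via linear independence of the $u_iv_i$ (invoking $U_\chi(\ggg_{-1})$-freeness of $K_\chi(\lambda)$) rather than directly via the grading, but the content is identical.
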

\begin{proof}
Let $x\in\ggg_{+1}$. Then $d(xu_1)=0$, $d(xu_2)=-1$, $d(xu_3)=-2$. So $xv=0$ if and only if $xu_iv_i=0, i=1,2,3$. Let $y\in\nnn_0$. Then $d(yu_1)=-1$, $d(yu_2)=-2$, $d(yu_3)=-3$. So $yv=0$ if and only if $yu_iv_i=0, i=1,2,3$. If $h\in\hhh$, then
$hv-\mu(h)v=(h-\mu(h))u_1v_1+(h-\mu(h))u_2v_2+(h-\mu(u))u_3v_3$. Since $K_{\chi}(\lambda)$ is $U_{\chi}(\ggg_{-1})$-free, $u_1v_1,u_2v_2,u_3v_3$ are linearly independent in $K_{\chi}(\lambda)$. So $hv-\mu(h)v=0$ if and only if $hu_iv_i=\mu(h)u_iv_i, i=1,2,3$.
\end{proof}
By Proposition \ref{max v}, we only need to find the homogeneous maximal vectors.
The following three propositions are obtained by a direct computation.
\begin{prop}\label{m1}
Let
$m_1=X_{-\epsilon_1-\epsilon_2}w_1+X_{-\epsilon_1-\epsilon_3}w_2+X_{-\epsilon_2-\epsilon_3}w_3\in K_{\chi}(\lambda)$, $w_1$, $w_2$, $w_3\in L_{\chi}^0(\lambda).$ Then
$m_1$ is a maximal vector of weight $(\mu_1,\mu_2)$ if and only if
\begin{gather}
H_{\epsilon_1-\epsilon_2}w_1=\mu_1w_1, H_{\epsilon_1-\epsilon_2}w_2=(\mu_1+1)w_2, H_{\epsilon_1-\epsilon_2}w_3=(\mu_1-1)w_3;\label{2.1}\\
H_{\epsilon_2-\epsilon_3}w_1=(\mu_2+1)w_1, H_{\epsilon_2-\epsilon_3}w_2=(\mu_2-1)w_2, H_{\epsilon_1-\epsilon_2}w_3=\mu_2w_3;\label{2.2}\\
X_{-\epsilon_1+\epsilon_3}w_2+X_{-\epsilon_2+\epsilon_3}w_3=0;\label{2.3}\\
X_{\epsilon_1-\epsilon_2}w_1=X_{\epsilon_1-\epsilon_2}w_2=-w_2+X_{\epsilon_1-\epsilon_2}w_3=0;\label{2.4}\\
X_{\epsilon_2-\epsilon_3}w_1=-w_1+X_{\epsilon_2-\epsilon_3}w_2=X_{\epsilon_2-\epsilon_3}w_3=0.\label{2.5}
\end{gather}
\end{prop}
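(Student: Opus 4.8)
The plan is to exploit the $\bbz$-grading of $K_{\chi}(\lambda)$ together with the fact, already used in Proposition \ref{max v}, that $K_{\chi}(\lambda)$ is free over $U_{\chi}(\ggg_{-1})$. Since $m_1$ lies in $K_{\chi}(\lambda)_{-1}$, which is spanned freely over $L_{\chi}^0(\lambda)$ by the vectors $X_{-\epsilon_i-\epsilon_j}v$ ($1\le i<j\le 3$, $v\in L_{\chi}^0(\lambda)$), I would first reduce the assertion ``$m_1$ is a maximal vector of weight $(\mu_1,\mu_2)$'' to: (a) $H_{\epsilon_1-\epsilon_2}m_1=\mu_1 m_1$ and $H_{\epsilon_2-\epsilon_3}m_1=\mu_2 m_1$; and (b) $Y\cdot m_1=0$ for $Y$ ranging over a set of Lie-superalgebra generators of $\nnn=\nnn_0\oplus\ggg_{+1}$. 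For (b) it suffices to test generators, because $\{x\in\nnn : x\cdot m_1=0\}$ is closed under the super bracket (if $x\cdot m_1=y\cdot m_1=0$ then $[x,y]\cdot m_1=0$); and $\nnn$ is generated by $X_{\epsilon_1-\epsilon_2}$, $X_{\epsilon_2-\epsilon_3}$ (which generate $\nnn_0$, the positive nilradical of $\mathfrak{sl}(3)$) together with the lowest-weight vector $X_{\epsilon_3+\epsilon_3}$ of $\ggg_{+1}$, since $\ggg_{+1}=U(\nnn_0)X_{\epsilon_3+\epsilon_3}$ and $[\ggg_{+1},\ggg_{+1}]=0$.

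For each generator $Y$, I would expand $Y\cdot m_1$ by applying the rule $Y(X_\alpha v)=[Y,X_\alpha]v+(-1)^{|Y||X_\alpha|}X_\alpha(Yv)$ to the three summands of $m_1$, reading off the structure constants $[Y,X_{-\epsilon_i-\epsilon_j}]$ from the matrix realization of $\p(3)$. When $Y=H_{\epsilon_1-\epsilon_2}$ or $H_{\epsilon_2-\epsilon_3}$ the brackets are scalar, namely $[H,X_{-\epsilon_i-\epsilon_j}]=(-\epsilon_i-\epsilon_j)(H)\,X_{-\epsilon_i-\epsilon_j}$, so $Y\cdot m_1$ stays in $K_{\chi}(\lambda)_{-1}$ and comparing the three coefficients via freeness reproduces precisely the weight relations \eqref{2.1}, \eqref{2.2}, i.e.\ each $w_i$ is an $\hhh$-weight vector of the stated weight. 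When $Y=X_{\epsilon_1-\epsilon_2}$ (resp.\ $X_{\epsilon_2-\epsilon_3}$), a root count shows that among $[Y,X_{-\epsilon_i-\epsilon_j}]$ only the bracket of weight $-\epsilon_2-\epsilon_3$ (resp.\ $-\epsilon_1-\epsilon_3$) survives, and it equals $-X_{-\epsilon_2-\epsilon_3}$ (resp.\ $-X_{-\epsilon_1-\epsilon_3}$); again $Y\cdot m_1\in K_{\chi}(\lambda)_{-1}$ and freeness yields exactly \eqref{2.4} (resp.\ \eqref{2.5}). When $Y=X_{\epsilon_3+\epsilon_3}\in\ggg_{+1}$, the terms $X_\alpha(Yv)$ all vanish because $\ggg_{+1}$ acts as $0$ on $L_{\chi}^0(\lambda)$ by construction of the Kac module, so $Y\cdot m_1=[X_{\epsilon_3+\epsilon_3},X_{-\epsilon_1-\epsilon_2}]w_1+[X_{\epsilon_3+\epsilon_3},X_{-\epsilon_1-\epsilon_3}]w_2+[X_{\epsilon_3+\epsilon_3},X_{-\epsilon_2-\epsilon_3}]w_3$ lands in $\ggg_0\cdot L_{\chi}^0(\lambda)\subseteq K_{\chi}(\lambda)_0$; a direct computation gives $[X_{\epsilon_3+\epsilon_3},X_{-\epsilon_1-\epsilon_2}]=0$ while $[X_{\epsilon_3+\epsilon_3},X_{-\epsilon_1-\epsilon_3}]$ and $[X_{\epsilon_3+\epsilon_3},X_{-\epsilon_2-\epsilon_3}]$ are equal nonzero scalar multiples of $X_{-\epsilon_1+\epsilon_3}$ and $X_{-\epsilon_2+\epsilon_3}$, so that $Y\cdot m_1$ is a nonzero multiple of $X_{-\epsilon_1+\epsilon_3}w_2+X_{-\epsilon_2+\epsilon_3}w_3$, whose vanishing is \eqref{2.3}.

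The converse direction is immediate from the same identities: \eqref{2.1}--\eqref{2.2} give (a), and \eqref{2.3}, \eqref{2.4}, \eqref{2.5} make $X_{\epsilon_3+\epsilon_3}\cdot m_1$, $X_{\epsilon_1-\epsilon_2}\cdot m_1$, $X_{\epsilon_2-\epsilon_3}\cdot m_1$ vanish, so by the subalgebra remark $\nnn\cdot m_1=0$, which is (b).

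The main (though routine) obstacle is the structure-constant bookkeeping inside $\p(3)$: deciding which $[X_{\pm\epsilon_i-\epsilon_j},X_{-\epsilon_k-\epsilon_l}]$ and $[X_{\epsilon_i+\epsilon_j},X_{-\epsilon_k-\epsilon_l}]$ vanish (using that the only weights of $\ggg_{-1}$ are the $-\epsilon_i-\epsilon_j$ with $i<j$, while the weights of $\ggg_0$ are $0$ and the $\epsilon_i-\epsilon_j$), fixing the signs of the survivors, and remembering to use an anticommutator when bracketing two odd root vectors; in particular one has to verify that the two nonzero brackets coming from $X_{\epsilon_3+\epsilon_3}$ carry the same scalar, which is exactly what collapses the degree-$0$ constraint to the single equation \eqref{2.3} instead of two.
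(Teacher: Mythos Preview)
Your approach is correct and is precisely the ``direct computation'' that the paper alludes to but does not spell out. Your reduction to the three generators $X_{\epsilon_1-\epsilon_2}$, $X_{\epsilon_2-\epsilon_3}$, $X_{2\epsilon_3}$ of $\nnn$ is a clean way of organising the calculation; checking $Y\cdot m_1$ for each of them and reading off coefficients via the $U_\chi(\ggg_{-1})$-freeness of $K_\chi(\lambda)$ reproduces exactly \eqref{2.1}--\eqref{2.5}, as you describe.
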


\begin{prop}\label{m2}
Let $m_2=X_{-\epsilon_1-\epsilon_3}X_{-\epsilon_1-\epsilon_2}w_1+X_{-\epsilon_2-\epsilon_3}X_{-\epsilon_1-\epsilon_2}w_2 +X_{-\epsilon_1-\epsilon_3}X_{-\epsilon_2-\epsilon_3}$ $w_3\in K_{\chi}(\lambda)$, $w_1, w_2, w_3\in L_{\chi}^0(\lambda).$ Then
$m_2$ is a maximal vector of weight $(\mu_1,\mu_2)$ if and only if
\begin{gather}
H_{\epsilon_1-\epsilon_2}w_1=(\mu_1+1)w_1, H_{\epsilon_1-\epsilon_2}w_2=(\mu_1-1)w_2, H_{\epsilon_1-\epsilon_2}w_3=\mu_1w_3;\label{2.6}\\
H_{\epsilon_2-\epsilon_3}w_1=\mu_2w_1, H_{\epsilon_2-\epsilon_3}w_2=(\mu_2+1)w_2, H_{\epsilon_1-\epsilon_2}w_3=(\mu_2-1)w_3;\label{2.7}\\
X_{-\epsilon_2+\epsilon_3}w_2+X_{-\epsilon_1+\epsilon_3}w_1+w_3=X_{-\epsilon_1+\epsilon_3}w_3=
X_{-\epsilon_2+\epsilon_3}w_3=0;\label{2.8}\\
X_{\epsilon_1-\epsilon_2}w_1=X_{\epsilon_1-\epsilon_2}w_2-w_1=X_{\epsilon_1-\epsilon_2}w_3=0;\label{2.9}\\
X_{\epsilon_2-\epsilon_3}w_1=X_{\epsilon_2-\epsilon_3}w_2=w_2+X_{\epsilon_2-\epsilon_3}w_3=0.\label{2.10}
\end{gather}
\end{prop}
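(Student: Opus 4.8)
The plan is to verify directly the three conditions defining a maximal vector of weight $\mu=(\mu_1,\mu_2)$, namely $h\cdot m_2=\mu(h)m_2$ for all $h\in\hhh$, $\nnn_0\cdot m_2=0$, and $\ggg_{+1}\cdot m_2=0$ (these exhaust $\nnn=\nnn_0\oplus\ggg_{+1}$). The element $m_2$ is homogeneous of degree $-2$, and since $K_\chi(\lambda)$ is free over $U_\chi(\ggg_{-1})\cong\bigwedge^{\bullet}\ggg_{-1}$ one may identify $K_\chi(\lambda)_{-2}\cong\bigwedge^{2}\ggg_{-1}\otimes L^0_\chi(\lambda)$ and $K_\chi(\lambda)_{-1}\cong\ggg_{-1}\otimes L^0_\chi(\lambda)$ as vector spaces; in particular the three monomials $X_{-\epsilon_1-\epsilon_3}X_{-\epsilon_1-\epsilon_2}$, $X_{-\epsilon_2-\epsilon_3}X_{-\epsilon_1-\epsilon_2}$, $X_{-\epsilon_1-\epsilon_3}X_{-\epsilon_2-\epsilon_3}$ in the ansatz for $m_2$ form, up to sign, a basis of $\bigwedge^{2}\ggg_{-1}$, so comparing their coefficients in $L^0_\chi(\lambda)$ is legitimate. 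I will also use repeatedly that $\epsilon_1+\epsilon_2+\epsilon_3=0$ on $\hhh$, so that the weights $-2\epsilon_1-\epsilon_2-\epsilon_3$, $-\epsilon_1-2\epsilon_2-\epsilon_3$, $-\epsilon_1-\epsilon_2-2\epsilon_3$ of those three monomials restrict to $-\epsilon_1$, $-\epsilon_2$, $-\epsilon_3$ on $\hhh$.

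First I would handle the Cartan action. Applying the Leibniz rule and comparing coefficients, the identity $h\cdot m_2=\mu(h)m_2$ for all $h\in\hhh$ is equivalent to $w_1,w_2,w_3$ being $\hhh$-weight vectors of weights $\mu+\epsilon_1$, $\mu+\epsilon_2$, $\mu+\epsilon_3$; rewriting these in the coordinates obtained by evaluating at $H_{\epsilon_1-\epsilon_2}$ and $H_{\epsilon_2-\epsilon_3}$ gives exactly \eqref{2.6} and \eqref{2.7}. Next, since $\nnn_0$ is generated by $X_{\epsilon_1-\epsilon_2}$ and $X_{\epsilon_2-\epsilon_3}$, it suffices to impose $X_{\epsilon_1-\epsilon_2}\cdot m_2=X_{\epsilon_2-\epsilon_3}\cdot m_2=0$. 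These even elements preserve the degree, and dragging them to the right through the two $\ggg_{-1}$-factors of each term of $m_2$ — using $[\ggg_0,\ggg_{-1}]\subseteq\ggg_{-1}$ together with the explicit structure constants of the matrix realization, the brackets $[X_{\epsilon_i-\epsilon_{i+1}},X_{-\epsilon_a-\epsilon_b}]$ producing the mixed terms and the final $X_{\epsilon_i-\epsilon_{i+1}}$ acting on $w_k$ producing the bare terms $X_{\epsilon_i-\epsilon_{i+1}}w_k$ — lands in $K_\chi(\lambda)_{-2}$, and setting the three coordinates to zero gives \eqref{2.9} (from $X_{\epsilon_1-\epsilon_2}$) and \eqref{2.10} (from $X_{\epsilon_2-\epsilon_3}$).

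It remains to treat $\ggg_{+1}$, which is abelian and spanned by the $X_{\epsilon_a+\epsilon_b}$ with $1\le a\le b\le 3$ (subject to a single trace relation), each annihilating $L^0_\chi(\lambda)$. For each generator one expands $X_{\epsilon_a+\epsilon_b}\cdot m_2$ by the super Leibniz rule, dragging $X_{\epsilon_a+\epsilon_b}$ past the two odd $\ggg_{-1}$-factors; each passage replaces $X_{\epsilon_a+\epsilon_b}X_{-\epsilon_i-\epsilon_j}$ by $[X_{\epsilon_a+\epsilon_b},X_{-\epsilon_i-\epsilon_j}]-X_{-\epsilon_i-\epsilon_j}X_{\epsilon_a+\epsilon_b}$ with $[X_{\epsilon_a+\epsilon_b},X_{-\epsilon_i-\epsilon_j}]\in\ggg_0$ (the super-bracket of two odd elements being an anticommutator, so the signs must be tracked), and the remaining $\ggg_0$-factors are moved onto $w_k$ via $[\ggg_0,\ggg_{-1}]\subseteq\ggg_{-1}$. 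The outcome lies in $K_\chi(\lambda)_{-1}\cong\ggg_{-1}\otimes L^0_\chi(\lambda)$; imposing vanishing of the three coordinates for every $X_{\epsilon_a+\epsilon_b}$, and then collapsing the resulting system using the weight relations \eqref{2.6}--\eqref{2.7}, yields exactly \eqref{2.8}. Conversely, if $w_1,w_2,w_3$ satisfy \eqref{2.6}--\eqref{2.10}, running these computations backwards gives $h\cdot m_2=\mu(h)m_2$, $\nnn_0\cdot m_2=0$ and $\ggg_{+1}\cdot m_2=0$, so $m_2$ is a maximal vector of weight $(\mu_1,\mu_2)$ by Proposition~\ref{max v}.

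The main obstacle is the bookkeeping in the $\ggg_{+1}$ step: one must compute all the brackets $[X_{\epsilon_a+\epsilon_b},X_{-\epsilon_i-\epsilon_j}]\in\ggg_0$ in the $2n\times 2n$ matrix model (these contribute Cartan elements, positive even root vectors, and the negative even root vectors $X_{-\epsilon_i+\epsilon_j}$ that appear in \eqref{2.8}), get every odd-element sign right in the Leibniz expansions, re-express the answer in the ordered PBW basis of $K_\chi(\lambda)_{-1}$, and recognize that the full system so obtained is equivalent — once the weight constraints are imposed — to the three clean equations \eqref{2.8}. The $\nnn_0$ step is of the same nature but markedly simpler, since $X_{\epsilon_i-\epsilon_{i+1}}$ is even and everything stays in a single degree.
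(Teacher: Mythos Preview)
Your proposal is correct and follows exactly the approach the paper indicates: the paper states only that Propositions~\ref{m1}--\ref{m3} ``are obtained by a direct computation'' and gives no further details, and your outline is precisely that direct computation, organized by first extracting the weight conditions \eqref{2.6}--\eqref{2.7} from the $\hhh$-action, then \eqref{2.9}--\eqref{2.10} from the two simple even root vectors, and finally \eqref{2.8} from the $\ggg_{+1}$-action.
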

\begin{prop}\label{m3}
Let $m_3=X_{-\epsilon_1-\epsilon_3}X_{-\epsilon_2-\epsilon_3}X_{-\epsilon_1-\epsilon_2}w\in V$, $w_1, w_2, w_3\in L_{\chi}^0(\lambda).$ Then
$m_3$ is a maximal vector of weight $(\mu_1,\mu_2)$ if and only if
\begin{gather}
H_{\epsilon_1-\epsilon_2}w=\mu_1w, H_{\epsilon_2-\epsilon_3}w=\mu_2w_2;\label{2.11}\\
X_{-\epsilon_2+\epsilon_3}w=X_{-\epsilon_1+\epsilon_3}w=X_{\epsilon_1-\epsilon_2}w=X_{\epsilon_2-\epsilon_3}w=0.\label{2.12}
\end{gather}
\end{prop}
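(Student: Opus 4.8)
The plan is to exploit the $\bbz$-grading on $K_\chi(\lambda)$ together with its freeness over $U_\chi(\ggg_{-1})$. Since $\ggg_{-1}$ is three-dimensional and odd, $U_\chi(\ggg_{-1})\cong\bigwedge^{\bullet}\ggg_{-1}$ has one-dimensional top-degree piece, so $K_\chi(\lambda)_{-3}$ is free of rank one over $L^0_\chi(\lambda)$ via $v\mapsto X_{-\epsilon_1-\epsilon_3}X_{-\epsilon_2-\epsilon_3}X_{-\epsilon_1-\epsilon_2}v$; in particular $m_3$ is a single homogeneous vector of degree $-3$. I would first record that, as $\ggg_0$-modules, $K_\chi(\lambda)\cong U_\chi(\ggg_{-1})\otimes_\K L^0_\chi(\lambda)$ with $\ggg_0$ acting diagonally (adjointly on the first factor), so $K_\chi(\lambda)_{-3}\cong\big(\bigwedge^3\ggg_{-1}\big)\otimes_\K L^0_\chi(\lambda)$; and that $\bigwedge^3\ggg_{-1}$ is the trivial $\ggg_0$-module, since its $\hhh$-weight is $-2(\epsilon_1+\epsilon_2+\epsilon_3)=0$ and $\mathfrak{sl}(3)$ admits no nontrivial one-dimensional module. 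Thus $K_\chi(\lambda)_{-3}\cong L^0_\chi(\lambda)$ as $\ggg_0$-modules, with $m_3$ corresponding to a nonzero scalar multiple of $w$.

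The contributions of $\hhh$ and $\nnn_0$ then drop out at once: for $h\in\hhh$, $h\cdot m_3=\mu(h)m_3$ iff $h\cdot w=\mu(h)w$, which is \eqref{2.11}; and $\nnn_0\cdot m_3=0$ iff $\nnn_0\cdot w=0$, which — using $X_{\epsilon_1-\epsilon_3}=[X_{\epsilon_1-\epsilon_2},X_{\epsilon_2-\epsilon_3}]$ — is the same as $X_{\epsilon_1-\epsilon_2}w=X_{\epsilon_2-\epsilon_3}w=0$. For $\ggg_{+1}$ I would first observe that, once $\nnn_0\cdot m_3=0$ is known, $[n,x]\cdot m_3=n\cdot(x\cdot m_3)$ for $n\in\nnn_0$ and $x\in\ggg_{+1}$, so $\{x\in\ggg_{+1}:x\cdot m_3=0\}$ is a $\nnn_0$-submodule of $\ggg_{+1}$; and a direct check gives $\ggg_{+1}=U(\nnn_0)\cdot X_{2\epsilon_3}$ (repeatedly bracketing the lowest weight vector $X_{2\epsilon_3}$ with $\nnn_0$ fills out $\ggg_{+1}$). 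Hence $\ggg_{+1}\cdot m_3=0$ if and only if $X_{2\epsilon_3}\cdot m_3=0$.

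Finally, I would compute $X_{2\epsilon_3}\cdot m_3$ by carrying $X_{2\epsilon_3}$ to the right through $X_{-\epsilon_1-\epsilon_3}X_{-\epsilon_2-\epsilon_3}X_{-\epsilon_1-\epsilon_2}$. Since $X_{2\epsilon_3}$ annihilates $L^0_\chi(\lambda)$, $[X_{2\epsilon_3},X_{-\epsilon_1-\epsilon_2}]=0$, and $[X_{2\epsilon_3},X_{-\epsilon_i-\epsilon_3}]$ is a nonzero multiple of $X_{-\epsilon_i+\epsilon_3}$ for $i=1,2$ (the structure constant is a unit because $p>2$), and since the even elements $X_{-\epsilon_i+\epsilon_3}$ then commute down onto $w$ with every intervening bracket either vanishing or producing a factor $X_{-\epsilon_1-\epsilon_2}$ that meets a second copy of $X_{-\epsilon_1-\epsilon_2}$ (and $X_{-\epsilon_1-\epsilon_2}^2=0$), one is left with
\[
X_{2\epsilon_3}\cdot m_3=c_1\,X_{-\epsilon_2-\epsilon_3}X_{-\epsilon_1-\epsilon_2}\big(X_{-\epsilon_1+\epsilon_3}w\big)+c_2\,X_{-\epsilon_1-\epsilon_3}X_{-\epsilon_1-\epsilon_2}\big(X_{-\epsilon_2+\epsilon_3}w\big)
\]
with $c_1,c_2\in\K^{\times}$. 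Because $X_{-\epsilon_2-\epsilon_3}X_{-\epsilon_1-\epsilon_2}$ and $X_{-\epsilon_1-\epsilon_3}X_{-\epsilon_1-\epsilon_2}$ are distinct basis monomials of $\bigwedge^2\ggg_{-1}$, they span independent free $L^0_\chi(\lambda)$-summands of $K_\chi(\lambda)_{-2}$, so the right-hand side vanishes iff $X_{-\epsilon_1+\epsilon_3}w=X_{-\epsilon_2+\epsilon_3}w=0$, which together with the previous paragraph is \eqref{2.12}. Every step is reversible, so the listed conditions are sufficient as well. I expect the genuine work to be purely bookkeeping — tracking the super-signs when the odd generators are commuted, and checking that the handful of structure constants that occur are nonzero in $\K$; there is no conceptual difficulty, which is why the statement may be called a ``direct computation.''
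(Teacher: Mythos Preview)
Your argument is correct and is precisely the ``direct computation'' the paper alludes to (the paper gives no details beyond that phrase). Your organisation is a touch more conceptual than a brute-force check: identifying $K_\chi(\lambda)_{-3}\cong\big(\bigwedge^3\ggg_{-1}\big)\otimes L^0_\chi(\lambda)$ as the trivial twist of $L^0_\chi(\lambda)$ disposes of the $\hhh$- and $\nnn_0$-conditions in one stroke, and reducing the $\ggg_{+1}$-annihilation to the single vector $X_{2\epsilon_3}$ via the $U(\nnn_0)$-cyclicity of the lowest weight vector in $S^2(V)$ cuts the remaining work to one commutator calculation; the paper presumably intends the reader simply to push each root vector of $\nnn$ through $X_{-\epsilon_1-\epsilon_3}X_{-\epsilon_2-\epsilon_3}X_{-\epsilon_1-\epsilon_2}$ in turn, which gives the same conditions with more bookkeeping.
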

We will investigate irreducible modules for different $\chi$ case by case.  Firstly, by Proposition \ref{delta1} and Proposition \ref{delta3}, $K_{\chi}(\lambda)$ is irreducible for regular nilpotent and regular semisimple $\chi$.
Then we investigate irreducible modules for $\chi=\chi_2$ subregular semisimple. Suppose $\lambda=(r,s)\in\Lambda_{\chi}$. Then $r\in\bbf_p, s\notin\bbf_p$.
As $\delta(\lambda)=rs(r+s+1)$, by Proposition \ref{delta}, we only need to consider the case when $r=0$.

\begin{prop}
Let $\chi=\chi_2$ be subregular semisimple and $\lambda\in\Lambda_{\chi}$. Suppose  $\lambda=(0, s), s\notin\bbf_p$. Then $$[K_{\chi}(\lambda)]=[L_{\chi}(\lambda)]+[L_{\chi}((\lambda-\epsilon_1-\epsilon_2)].$$
\end{prop}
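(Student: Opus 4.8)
The plan is to determine the composition factors of $K_{\chi}(\lambda)$ by classifying all of its maximal vectors; by Proposition~\ref{max v} it suffices to find the homogeneous ones, which fall into the three families of Propositions~\ref{m1}, \ref{m2} and~\ref{m3}. The essential input is an explicit model for $L^{0}_{\chi}(\lambda)$: since $r=0$, the irreducible $\mathfrak{l}\cong\mathfrak{gl}(2)$-module $V_{\chi}$ is one-dimensional, so $L^{0}_{\chi}(\lambda)=U_{\chi}(\mathfrak{u}^{-})v_{\lambda}$, where $v_{\lambda}$ is a highest weight vector for $\ggg_{\0}\cong\mathfrak{sl}(3)$ and, setting $f_{1}=X_{-\epsilon_{2}+\epsilon_{3}}$, $f_{2}=X_{-\epsilon_{1}+\epsilon_{3}}$ (a commuting basis of $\mathfrak{u}^{-}$, with $f_{1}^{p}=f_{2}^{p}=0$ on $L^{0}_{\chi}(\lambda)$), the monomials $f_{1}^{a}f_{2}^{b}v_{\lambda}$ with $0\le a,b\le p-1$ form a basis. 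The $\ggg_{\0}$-action is read off from the relations in $\ggg$; for the simple raising operators $e_{1}=X_{\epsilon_{2}-\epsilon_{3}}$ and $e_{3}=X_{\epsilon_{1}-\epsilon_{2}}$ one gets $e_{1}\cdot f_{1}^{a}f_{2}^{b}v_{\lambda}=a(s-b-a+1)f_{1}^{a-1}f_{2}^{b}v_{\lambda}$ and $e_{3}\cdot f_{1}^{a}f_{2}^{b}v_{\lambda}=-b\,f_{1}^{a+1}f_{2}^{b-1}v_{\lambda}$. Here the standing fact $s\notin\bbf_{p}$ (automatic for $\lambda\in\Lambda_{\chi_{2}}$) is decisive: every scalar of the shape $s+c$ with $c\in\bbf_{p}$, hence every $s-b-a+1$, is nonzero.

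First I would check that $\K v_{\lambda}$ is the whole space of $\nnn_{0}$-invariants of $L^{0}_{\chi}(\lambda)$: if $w=\sum c_{ab}f_{1}^{a}f_{2}^{b}v_{\lambda}$ has $e_{1}w=0$ then $c_{ab}=0$ for all $a\ge 1$ (since $s-b-a+1\ne 0$), and $e_{3}w=0$ then forces $c_{0b}=0$ for $b\ge 1$, so $w\in\K v_{\lambda}$; hence the only degree-$0$ maximal vector of $K_{\chi}(\lambda)$ is $v_{\lambda}$. Running the same analysis through Proposition~\ref{m1}: equations~\eqref{2.4}--\eqref{2.5} give $e_{1}w_{1}=e_{3}w_{1}=0$, so $w_{1}\in\K v_{\lambda}$, which pins the weight $(\mu_{1},\mu_{2})=(0,s-1)=\lambda-\epsilon_{1}-\epsilon_{2}$; the other equations determine $w_{2},w_{3}$ uniquely from $w_{1}$ (inverting $e_{1}$ on $f_{1}v_{\lambda}$ uses $s\ne 0$, and $e_{3}w_{2}=0$, $e_{1}w_{3}=0$ with the weight constraints~\eqref{2.1} kill the kernel ambiguity), while~\eqref{2.3} holds automatically because $[f_{1},f_{2}]=0$ and $[X_{\epsilon_{1}-\epsilon_{2}},X_{-\epsilon_{1}+\epsilon_{3}}]=-X_{-\epsilon_{2}+\epsilon_{3}}$. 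So the degree-$(-1)$ maximal vectors form a line $\K m$, with $m$ of weight $\lambda-\epsilon_{1}-\epsilon_{2}$ and $m\ne 0$ by $U_{\chi}(\ggg_{-1})$-freeness of $K_{\chi}(\lambda)$. Finally, Propositions~\ref{m2} and~\ref{m3} contribute nothing: in~\ref{m3}, \eqref{2.12} forces $w$ to be annihilated by $f_{1}$ and $f_{2}$, so $w\in\K f_{1}^{p-1}f_{2}^{p-1}v_{\lambda}$, but then $e_{1}w=-(s+3)f_{1}^{p-2}f_{2}^{p-1}v_{\lambda}\ne 0$, contradicting~\eqref{2.12}; in~\ref{m2}, \eqref{2.9}--\eqref{2.10} force $w_{1}\in\K v_{\lambda}$, and then $e_{1}w_{2}=0$ together with $e_{3}w_{2}=w_{1}\in\K v_{\lambda}$ and the weight constraints~\eqref{2.6}--\eqref{2.7} force $w_{1}=w_{2}=w_{3}=0$. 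Thus the maximal vectors of $K_{\chi}(\lambda)$ are exactly $\K v_{\lambda}\oplus\K m$.

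Now the conclusion. Since $\chi_{2}(\nnn_{0})=0$ and $\ggg_{+1}$ is an abelian subalgebra of $\nnn$ acting nilpotently, $U_{\chi}(\nnn)$ is local, so every nonzero submodule of $K_{\chi}(\lambda)$ contains a maximal vector; as $v_{\lambda}$ generates $K_{\chi}(\lambda)$, every proper submodule contains $m$. Since $m$ is a maximal vector, $M:=U_{\chi}(\ggg)m=U_{\chi}(\nnn^{-})m$ is a highest weight module of highest weight $\lambda-\epsilon_{1}-\epsilon_{2}$, all of whose weights lie below $\lambda$; hence $M$ is a proper nonzero submodule, and any submodule of $M$ is a proper submodule of $K_{\chi}(\lambda)$ and so contains $M$. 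Therefore $M$ is simple, $M\cong L_{\chi}(\lambda-\epsilon_{1}-\epsilon_{2})$, and $M$ is the socle of $K_{\chi}(\lambda)$, lying inside its unique maximal submodule $\operatorname{rad}K_{\chi}(\lambda)$, whose quotient is $L_{\chi}(\lambda)$. It remains to see that $K_{\chi}(\lambda)$ has length two, equivalently $M=\operatorname{rad}K_{\chi}(\lambda)$: any composition factor of $K_{\chi}(\lambda)/M$ other than $L_{\chi}(\lambda)$ would produce a maximal vector of $K_{\chi}(\lambda)/M$ lifting to a weight vector $w$ with $0\ne\nnn w\subseteq M$, whence the weight of $w$ is strictly below $\lambda-\epsilon_{1}-\epsilon_{2}$; this is excluded by a short further analysis (the degree-$0$ component of $w$ is an $\nnn_{0}$-invariant of $L^{0}_{\chi}(\lambda)$ of weight $\ne\lambda$, hence $0$, and the remaining homogeneous components are treated as before, using $s\notin\bbf_{p}$ and the absence of maximal vectors in degrees $\le-2$), or by the dimension count $\dim K_{\chi}(\lambda)=2^{\dim\ggg_{-1}}\dim L^{0}_{\chi}(\lambda)=8p^{2}=\dim L_{\chi}(\lambda)+\dim L_{\chi}(\lambda-\epsilon_{1}-\epsilon_{2})$. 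Hence $[K_{\chi}(\lambda)]=[L_{\chi}(\lambda)]+[L_{\chi}(\lambda-\epsilon_{1}-\epsilon_{2})]$.

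The substantive obstacle is the maximal-vector computation of the second paragraph — above all, verifying that Propositions~\ref{m2} and~\ref{m3} produce no maximal vectors and that the degree-$(-1)$ solution space of Proposition~\ref{m1} is exactly one-dimensional. This relies on having the $\ggg_{\0}$-module $L^{0}_{\chi}(\lambda)$ completely explicit and on the systematic use of $s\notin\bbf_{p}$ to force coefficients to vanish; once the list of maximal vectors is in hand, the identification of the socle as $L_{\chi}(\lambda-\epsilon_{1}-\epsilon_{2})$ and the remaining bookkeeping are routine.
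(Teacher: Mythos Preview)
Your approach is essentially the paper's: both set up the explicit $p^{2}$-dimensional model $L_{\chi}^{0}(\lambda)=U_{\chi}(\mathfrak{u}^{-})v_{\lambda}$ (using $r=0$) and run through Propositions~\ref{m1}--\ref{m3} to show that the only homogeneous maximal vectors are $v_{\lambda}$ and a single degree-$(-1)$ vector $m$ of weight $\lambda-\epsilon_{1}-\epsilon_{2}$. The paper writes out the explicit formula $m=X_{-\epsilon_{1}-\epsilon_{2}}v+\tfrac{1}{s}X_{-\epsilon_{1}-\epsilon_{3}}X_{-\epsilon_{2}+\epsilon_{3}}v-\tfrac{1}{s}X_{-\epsilon_{2}-\epsilon_{3}}X_{-\epsilon_{1}+\epsilon_{3}}v$ and treats the sub-cases $w_{1}=0$, $w_{2}=0$ more explicitly, then simply stops after listing the two maximal vectors; you sketch the same computation and then try to justify the passage from ``two maximal vectors'' to ``length two'', which the paper leaves implicit.

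One correction: your dimension-count alternative is circular. Both $\lambda=(0,s)$ and $\lambda-\epsilon_{1}-\epsilon_{2}=(0,s-1)$ are atypical ($\delta=rs(r+s+1)$ vanishes at $r=0$), so neither $\dim L_{\chi}(\lambda)$ nor $\dim L_{\chi}(\lambda-\epsilon_{1}-\epsilon_{2})$ is known in advance, and the equality $8p^{2}=\dim L_{\chi}(\lambda)+\dim L_{\chi}(\lambda-\epsilon_{1}-\epsilon_{2})$ is exactly what you are trying to prove. The right way to close the gap is the one you gesture at: $M=U_{\chi}(\ggg)m$ is a graded submodule (as $m$ is homogeneous), so $K_{\chi}(\lambda)/M$ inherits the $\bbz$-grading and the argument of Proposition~\ref{max v} still applies there; one then checks degree by degree that $K_{\chi}(\lambda)/M$ has $\bar v_{\lambda}$ as its only maximal vector (in degree $0$ this uses $M_{0}=0$, and in degrees $\le -1$ the relaxed conditions ``$\in M_{-i}$'' rather than ``$=0$'' require reworking the computations of Propositions~\ref{m1}--\ref{m3} modulo $M$). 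Your phrase ``treated as before'' undersells this: the $\nnn_{0}$-conditions \eqref{2.4}--\eqref{2.5} become congruences modulo $M_{-1}=U_{\chi}(\nnn_{0}^{-})m$ rather than equalities, so the analysis is genuinely new, though still routine given the explicit model.
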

\begin{proof}
Let $m_1, m_2, m_3$ given as in Proposition \ref{m1}, Proposition \ref{m2} and Proposition \ref{m3} be $\bbb$-maximal vectors. Let $v$ be the unique nonzero maximal vector of $L_{\chi}^0(\lambda)$ up to a scalar.
By (\ref{2.4}) and (\ref{2.5}), $X_{\epsilon_1-\epsilon_2}w_1=X_{\epsilon_2-\epsilon_3}w_1=0$. So $w_1$ is $\nnn_0$-maximal. Suppose $w_1\neq 0$. We may assume $w_1=v$. The weight of $w_1$ is $(0, s)$. By (\ref{2.1}) and (\ref{2.2}), the weight of $w_2$ is $(1,s-2)$. Let $$w_2=\sum\limits_{a,b,c} k_{c,b,a}X_{-\epsilon_1+\epsilon_3}^{c}X_{-\epsilon_2+\epsilon_3}^{b}X_{-\epsilon_1+\epsilon_2}^{a}v, k_{c,b,a}\in\K.$$
Comparing the weights of $w_1$ and $w_2$, we have
\[ \left\{
     \begin{array}{ll}
       -c+b-2a=1,  \\
       -c-2b+a=-2.
     \end{array}
\right.\]
So
$$w_2=\sum\limits_{0\leq a\leq p-1} k_{-a,a+1,a}X_{-\epsilon_1+\epsilon_3}^{-a}X_{-\epsilon_2+\epsilon_3}^{a+1}X_{-\epsilon_1+\epsilon_2}^{a}v.$$
(Remark:  We also use $-a$ to represent its smallest nonnegative integer modulo $p$).
As $\lambda=(0, s)$, $L_{\chi}^0(\lambda)$ is $U_{\chi}(\uuu^-)$-free (refer to \cite{J98}). So $a$ must be zero, and $w_2=k_{0,1,0}X_{-\epsilon_2+\epsilon_3}v$. By (\ref{2.5}), $X_{\epsilon_2-\epsilon_3}w_2= k_{0,1,0}sv=v$. So $$w_2=\frac{1}{s}X_{-\epsilon_2+\epsilon_3}v.$$
By (\ref{2.1}) and (\ref{2.2}), the weight of $w_3$ is $(-1, s-1)$. So
$$w_3=\sum\limits_{0\leq a\leq p-1} t_{-a+1,a,a}X_{-\epsilon_1+\epsilon_3}^{-a+1}X_{-\epsilon_2+\epsilon_3}^{a}X_{-\epsilon_1+\epsilon_2}^{a}v=t_{1,0,0}X_{\epsilon_3-\epsilon_1}v, t_{-a+1,a,a}\in\K$$
By (\ref{2.4}), $X_{\epsilon_1-\epsilon_2}w_3=-t_{1,0,0}X_{\epsilon_3-\epsilon_2}v=w_2=\frac{1}{s}X_{-\epsilon_2+\epsilon_3}v$. So
$$w_3=-\frac{1}{s}X_{\epsilon_3-\epsilon_1}v.$$
Then it can be checked that $$m_1=X_{-\epsilon_1-\epsilon_2}v+\frac{1}{s}X_{-\epsilon_1-\epsilon_3}X_{-\epsilon_2+\epsilon_3}v-\frac{1}{s}X_{-\epsilon_2-\epsilon_3}X_{-\epsilon_1+\epsilon_3}v$$
satisfies (\ref{2.1})-(\ref{2.5}).

Suppose $w_1=0$. By (\ref{2.4}) and (\ref{2.5}), $w_2$ is a nonzero multiple of $v$ if $w_2\neq 0$. We may assume $w_2=v$. By (\ref{2.1}) and (\ref{2.2}), the weight of $w_3$ is $(-2,s+1)$. Then $w_3=t_{1,p-1,0}X_{-\epsilon_1+\epsilon_3}X_{-\epsilon_2+\epsilon_3}^{p-1}v$. As $X_{-\epsilon_1+\epsilon_3}w_2+X_{-\epsilon_2+\epsilon_3}w_3=X_{-\epsilon_1+\epsilon_3}v\neq 0$, it contradicts (\ref{2.3}). So $w_2=0$.
By (\ref{2.4}) and (\ref{2.5}), $w_3$ is a nonzero multiple of $v$ if $w_3\neq 0$. We may assume $w_3=v$. It contradicts (\ref{2.3}). So $w_1=w_2=w_3=0$ and $m_1=0$.
Applying a similar argument, by (\ref{2.6})-(\ref{2.12}), it can be checked that $m_2=m_3=0$.

So $v$ and $X_{-\epsilon_1-\epsilon_2}v+\frac{1}{s}X_{-\epsilon_1-\epsilon_3}X_{-\epsilon_2+\epsilon_3}v-\frac{1}{s}X_{-\epsilon_2-\epsilon_3}X_{-\epsilon_1+\epsilon_3}v$ are the only two maximal vectors of $K_{\chi}(\lambda)$ up to a scalar.
\end{proof}

\begin{prop}\label{chi5}
Let $\chi=\chi_4$ and $\lambda\in\Lambda_{\chi}$. Suppose  $\lambda$ is atypical. Then
$$[K_{\chi}(\lambda)]=[L_{\chi}(\lambda)].$$
\end{prop}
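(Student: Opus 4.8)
The plan is to re-use the device from the proof of Proposition~\ref{delta2}: twist $\lambda$ by the single reflection $w:=s_{\epsilon_1-\epsilon_2}$ that is available for $\chi=\chi_4$, so as to land on a \emph{typical} weight, and then invoke Proposition~\ref{delta}. (A more computational alternative would be to run the arguments of Propositions~\ref{m1}--\ref{m3} directly; see the last paragraph.)

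First I would describe the atypical weights in $\Lambda_\chi$ explicitly. Since $\chi_4(H_{\epsilon_1-\epsilon_2})=0$ while $\chi_4(H_{\epsilon_2-\epsilon_3})\neq 0$ and the $H_{\epsilon_i-\epsilon_{i+1}}$ are toral, a weight $\lambda=(r,s)$ lies in $\Lambda_\chi$ exactly when $r\in\bbf_p$ and $s\notin\bbf_p$ (the same $\Lambda_\chi$ as for $\chi_2$). Then $s\neq 0$ and $r+s+1\notin\bbf_p$, so from $\delta(\lambda)=rs(r+s+1)$ we see that $\lambda$ is atypical if and only if $r=0$; i.e. the atypical $\lambda\in\Lambda_\chi$ are precisely those with $\lambda=(0,s)$, $s\notin\bbf_p$.

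Second, I would transport such a $\lambda$ by the dot action of $w$. Writing $\rho_0=\epsilon_1-\epsilon_3$, a direct computation gives $w\cdot\lambda=(-2,\,s+1)$ in the $(r,s)$-notation; moreover $(-2)^p-(-2)=0$ and $(s+1)^p-(s+1)=s^p-s=\chi_4(H_{\epsilon_2-\epsilon_3})^p$, so $w\cdot\lambda\in\Lambda_\chi$ as well. As noted in case~(4) above, $L^0_\chi(\lambda)\cong L^0_\chi(w\cdot\lambda)$ as $U_\chi(\ggg_{\0})$-modules; since $\ggg_{+1}$ acts trivially on both of them, this is an isomorphism of $U_\chi(\ggg_{\0}\oplus\ggg_{+1})$-modules, and inducing up yields $K_\chi(\lambda)\cong K_\chi(w\cdot\lambda)$. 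Third, using the formula for $\delta$ under the Weyl action with $w=(2,1,3)$ (here $x_1=x_2$ and $x_2-x_3=s$), one computes $\delta(w\cdot\lambda)=(-2)\,s\,(s+1)$, which is nonzero since $p>3$ and $s\notin\bbf_p$. Proposition~\ref{delta} then gives that $K_\chi(w\cdot\lambda)$ is irreducible, hence so is $K_\chi(\lambda)$, i.e. $[K_\chi(\lambda)]=[L_\chi(\lambda)]$.

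The only genuinely delicate point is the bookkeeping: pinning down $\Lambda_{\chi_4}$, verifying that $w\cdot\lambda$ still belongs to it (so that $K_\chi(w\cdot\lambda)$ is defined and Proposition~\ref{delta} indeed applies), and keeping the weight-coordinate convention distinct from the $\sum x_i\epsilon_i$ convention when evaluating $\delta(w\cdot\lambda)$. There is no deeper obstacle, once one observes that every atypical weight in $\Lambda_{\chi_4}$ is a single $s_{\epsilon_1-\epsilon_2}$-step away from a typical one. If a self-contained argument were preferred, one would instead check that for $\lambda=(0,s)$, $s\notin\bbf_p$, the systems (\ref{2.1})--(\ref{2.12}) of Propositions~\ref{m1}--\ref{m3} admit only the trivial solution $m_1=m_2=m_3=0$, so that the canonical degree-$0$ vector is the unique maximal vector of $K_\chi(\lambda)$; but the reduction above is considerably shorter.
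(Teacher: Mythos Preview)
Your argument is correct and is genuinely different from the paper's proof. The paper carries out the direct computation you sketch in your final paragraph: it fixes the explicit basis $\{X_{-\epsilon_1+\epsilon_3}^{c}X_{-\epsilon_2+\epsilon_3}^{b}X_{-\epsilon_1+\epsilon_2}^{a}v\}$ of $L^0_\chi(\lambda)$, assumes a putative maximal vector $m_1$ (then $m_2$, $m_3$), and works through the weight and relation constraints (\ref{2.1})--(\ref{2.12}) until a contradiction is reached (for $m_1$ the clash is with (\ref{2.5}): one is forced to $w_3=-\tfrac{1}{\mu_2}X_{-\epsilon_1+\epsilon_3}v$, whence $X_{\epsilon_2-\epsilon_3}w_3\neq 0$). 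Your route avoids all of this by observing that the single available reflection $w=s_{\epsilon_1-\epsilon_2}$ moves every atypical $\lambda=(0,s)\in\Lambda_{\chi_4}$ to the typical weight $(-2,s+1)$, and then invoking the isomorphism $L^0_\chi(\lambda)\cong L^0_\chi(w\cdot\lambda)$ recorded for $\chi_4$ together with Proposition~\ref{delta}. Your argument is shorter and more conceptual, and it makes transparent \emph{why} $\chi_4$ behaves better than $\chi_2$ (for the latter there is no Weyl symmetry on $L^0_\chi$ to exploit); the paper's computation, by contrast, is self-contained in that it does not appeal to the $L^0_\chi(\lambda)\cong L^0_\chi(w\cdot\lambda)$ fact and serves as a template reused verbatim in the subregular-nilpotent and restricted cases.
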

\begin{proof}
Suppose $m_1, m_2, m_3$ given as in Proposition \ref{m1}, Proposition \ref{m2} and Proposition \ref{m3} are $\bbb$-maximal vectors. Let $v$ be a nonzero maximal vector of $L_{\chi}^0(\lambda)$.
By (\ref{2.4}) and (\ref{2.5}), $w_1$ is $\nnn_0$-maximal. Suppose $w_1\neq 0$. We may assume $w_1=v$.
Note that
$$\{X_{-\epsilon_1+\epsilon_3}^{c}X_{-\epsilon_2+\epsilon_3}^{b}X_{-\epsilon_1+\epsilon_2}^{a}v|0\leq a,b,c\leq p-1\}$$
is a basis of $L_{\chi}^0(\lambda)$.
Suppose the weight of $w_1$ is $\mu=(\mu_1,\mu_2)$. Actually, $\mu$ equals $\lambda$ or $s_{\epsilon_1-\epsilon_2}\cdot\lambda$. By (\ref{2.1}) and (\ref{2.2}), the weight of $w_2$ is $(\mu_1+1,\mu_2-2)$.
So $$w_2=\sum\limits_{0\leq a\leq p-1} k_{-a,a+1,a}X_{-\epsilon_1+\epsilon_3}^{-a}X_{-\epsilon_2+\epsilon_3}^{a+1}X_{-\epsilon_1+\epsilon_2}^{a}v.$$
As
\begin{eqnarray*}
&&X_{\epsilon_2-\epsilon_3}w_2=X_{\epsilon_2-\epsilon_3}\sum\limits_{0\leq a\leq p-1}(k_{-a,a+1,a})X_{-\epsilon_1+\epsilon_3}^{-a}X_{-\epsilon_2+\epsilon_3}^{a+1}X_{-\epsilon_1+\epsilon_2}^{a}v\\
&=&\sum\limits_{0\leq a\leq p-1}(k_{-a+1,a,a-1}(-a+1)+k_{-a,a+1,a}(a+1)(a+\mu_2))X_{-\epsilon_1+\epsilon_3}^{-a}X_{-\epsilon_2+\epsilon_3}^{a}X_{-\epsilon_1+\epsilon_2}^{a}v\\
&=&v
\end{eqnarray*}
So
\[\left(
  \begin{array}{c}
    k_{1,0,p-1}+\mu_2k_{0,1,0} \\
    0+2(\mu_2+1)k_{p-1,2,1} \\
    -k_{p-1,2,1}+3(\mu_2+2)k_{p-2,3,2} \\
    \vdots\\
    3k_{3,p-2,p-3}-(\mu_2-2)k_{2,p-1,-2} \\
    2k_{2,p-1,p-2}+0
  \end{array}
\right)
=
\left(
  \begin{array}{c}
    1 \\
    0 \\
    0 \\
    \vdots \\
    0 \\
    0 \\
  \end{array}
\right)
.\]
So $w_2=k_{0,1,0}X_{-\epsilon_2+\epsilon_3}v+k_{1,0,p-1}X_{-\epsilon_1+\epsilon_3}X_{-\epsilon_1+\epsilon_2}^{p-1}v$, and it satisfies
$k_{1,0,p-1}+k_{0,1,0}\mu_2=1.$  By (\ref{2.4}), $X_{\epsilon_1-\epsilon_2}w_2=0
=-k_{1,0,p-1}(\mu_1+2)X_{-\epsilon_1+\epsilon_3}X_{-\epsilon_1+\epsilon_2}^{p-2}v-k_{1,0,p-1}X_{-\epsilon_2+\epsilon_3}X_{-\epsilon_1+\epsilon_2}^{p-1}v$.
So $k_{1,0,p-1}=0$, and
$$w_2=\frac{1}{\mu_2}X_{-\epsilon_2+\epsilon_3}v.$$
By (\ref{2.1}) and (\ref{2.2}), the weight of $w_3$ is $(\mu_1-1,\mu_2-1)$. So
$$w_3=\sum\limits_{c=-a+1,b=a} t_{c,b,a}X_{-\epsilon_1+\epsilon_3}^{c}X_{-\epsilon_2+\epsilon_3}^{b}X_{-\epsilon_1+\epsilon_2}^{a}v.$$
By (\ref{2.4}),
\begin{eqnarray*}
&&X_{\epsilon_1-\epsilon_2}w_3=X_{\epsilon_1-\epsilon_2}\sum\limits_{0\leq a\leq p-1} t_{-a+1,a,a}X_{-\epsilon_1+\epsilon_3}^{-a+1}X_{-\epsilon_2+\epsilon_3}^{a}X_{-\epsilon_1+\epsilon_2}^{a}v\\
&=&\sum\limits_{0\leq a\leq p-1}(t_{-a,a+1,a+1}(a+1)(\mu_1-a)+t_{-a+1,a,a}(a-1))
X_{-\epsilon_1+\epsilon_3}^{-a}X_{-\epsilon_2+\epsilon_3}^{a+1}X_{\epsilon_2-\epsilon_1}^{a}v\\
&=&\frac{1}{\mu_2}X_{-\epsilon_2+\epsilon_3}v
\end{eqnarray*}
So
\[\left(
  \begin{array}{c}
   t_{0,1,1}(\mu_1)+t_{1,0,0}(-1) \\
    t_{-1,2,2}(2)(\mu_1-1)+t_{0,1,1}(0) \\
    t_{-2,3,3}(3)(\mu_1-2)+t_{-1,2,2}(1) \\
    \vdots\\
    t_{2,-1,-1}(-1)(\mu_1+2)+t_{3,-2,-2}(-3) \\
   t_{1,0,0}(0)(\mu_1+1)+t_{2,-1,-1}(-2)
  \end{array}
\right)
=
\left(
  \begin{array}{c}
    \frac{1}{\mu_2} \\
    0 \\
    0 \\
    \vdots \\
    0 \\
    0 \\
  \end{array}
\right)
\]
So $w_3= t_{0,1,1}X_{-\epsilon_2+\epsilon_3}X_{-\epsilon_1+\epsilon_2}v+
t_{1,0,0}X_{-\epsilon_1+\epsilon_3}v$ and satisfies  $t_{0,1,1}(\mu_1)+t_{1,0,0}(-1)=\frac{1}{\mu_2}$.
By (\ref{2.3}),
\begin{eqnarray*}
&&X_{-\epsilon_1+\epsilon_3}w_2+X_{-\epsilon_2+\epsilon_3}w_3\\
&=&\frac{1}{\mu_2}X_{-\epsilon_1+\epsilon_3}X_{-\epsilon_2+\epsilon_3}v+t_{0,1,1}X_{-\epsilon_2+\epsilon_3}^2X_{-\epsilon_1+\epsilon_2}v+
t_{1,0,0}X_{-\epsilon_1+\epsilon_3}X_{-\epsilon_2+\epsilon_3}v\\
&=&0
\end{eqnarray*}
So $$w_3=-\frac{1}{\mu_2}X_{-\epsilon_1+\epsilon_3}v.$$
As $X_{\epsilon_2-\epsilon_3}w_3
=-\frac{1}{\mu_2}X_{-\epsilon_1+\epsilon_2}v\neq 0$, it contradicts (\ref{2.5}). So $w_1=0$.

Applying a similar argument, it can be checked that $w_1=w_2=w_3=0$. So $m_1=0$.
By (\ref{2.6})-(\ref{2.12}), we can also check that $m_2=m_3=0$. So $K_{\chi}(\lambda)$ is irreducible.

\end{proof}

\begin{prop}
Let $\chi=\chi_5$ be subregular nilpotent, and $\lambda\in\Lambda_{\chi}$. Suppose  $\lambda$ is atypical. Then

\begin{itemize}
\item[(1)] If $\lambda=(0,s),s\neq 0,p-1$.
$$[K_{\chi}(\lambda)]=[L_{\chi}(\lambda)]+[L_{\chi}(\lambda-\epsilon_1-\epsilon_2)].$$

\item[(2)] If $\lambda=(0,p-1)$.
$$[K_{\chi}(\lambda)]=[L_{\chi}(\lambda)]+[L_{\chi}(\lambda-\epsilon_1-\epsilon_2)]+[L_{\chi}(\lambda-\epsilon_1-\epsilon_3)].$$

\item[(3)] Otherwise,
$$[K_{\chi}(\lambda)]=[L_{\chi}(\lambda)].$$

\end{itemize}
\end{prop}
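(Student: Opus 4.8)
The plan is to proceed exactly as in the proofs for $\chi_2$ and $\chi_4$: determine all homogeneous $\bbb$-maximal vectors of $K_\chi(\lambda)$ from Proposition~\ref{max v} together with Propositions~\ref{m1}, \ref{m2} and \ref{m3}, and then read off the composition series. First I record the reductions. For $\chi=\chi_5$ one has $\Lambda_\chi=\{(r,s):r,s\in\bbf_p\}$, atypicality amounts to $rs(r+s+1)=0$, and by Proposition~\ref{delta} only such weights need be treated; they split into exactly the three listed groups ($r=0$ with $s\neq 0,p-1$; the weight $(0,p-1)$; and the remaining atypical weights, i.e.\ $(0,0)$ together with those having $r\neq 0$). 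The structural input is that $L_\chi^0(\lambda)=U_\chi(\mathfrak{u}^-)\otimes V_\chi$ has the explicit basis $\{X_{-\epsilon_1+\epsilon_3}^{c}X_{-\epsilon_2+\epsilon_3}^{b}X_{-\epsilon_1+\epsilon_2}^{a}v\mid 0\leq b,c\leq p-1,\ 0\leq a\leq r\}$ (with $v$ a highest weight vector), that $X_{-\epsilon_2+\epsilon_3}$ acts \emph{invertibly} on $L_\chi^0(\lambda)$ since $X_{-\epsilon_2+\epsilon_3}^{p}v=v$ whereas $X_{-\epsilon_1+\epsilon_3}^{p}v=0$, and that when $r=0$ the module $L_\chi^0(\lambda)$ is $U_\chi(\mathfrak{u}^-)$-free of rank one, exactly as in the $\chi_2$ case.

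Because the sub-equation $X_{-\epsilon_2+\epsilon_3}w_3=0$ of \eqref{2.8} and the sub-equation $X_{-\epsilon_2+\epsilon_3}w=0$ of \eqref{2.12} force $w_3=0$ and $w=0$ by invertibility of $X_{-\epsilon_2+\epsilon_3}$, a short chase of the remaining equations gives $m_2=m_3=0$ for every $\lambda$, so the whole problem collapses to $m_1$. In Proposition~\ref{m1}, equations \eqref{2.4}--\eqref{2.5} make $w_1$ (or, when $w_1=0$, successively $w_2$ and then $w_3$) an $\nnn_0$-maximal vector, hence a scalar multiple of $v$, and the remaining equations, together with the weight constraints \eqref{2.1}--\eqref{2.3}, become a linear system over $\K$. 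Taking $w_1=v$ reproduces the $\chi_2$ computation verbatim: for $s\neq 0$ one gets $m_1=X_{-\epsilon_1-\epsilon_2}v+\tfrac1s X_{-\epsilon_1-\epsilon_3}X_{-\epsilon_2+\epsilon_3}v-\tfrac1s X_{-\epsilon_2-\epsilon_3}X_{-\epsilon_1+\epsilon_3}v$, a maximal vector of weight $\lambda-\epsilon_1-\epsilon_2$, while for $s=0$ the system is inconsistent. Taking instead $w_1=0,\ w_2=v$, the weight constraints force $w_3=-X_{-\epsilon_1+\epsilon_3}X_{-\epsilon_2+\epsilon_3}^{p-1}v$, and the resulting candidate $X_{-\epsilon_1-\epsilon_3}v-X_{-\epsilon_2-\epsilon_3}X_{-\epsilon_1+\epsilon_3}X_{-\epsilon_2+\epsilon_3}^{p-1}v$ of weight $\lambda-\epsilon_1-\epsilon_3$ satisfies the last surviving equation $X_{\epsilon_2-\epsilon_3}w_3=0$ of \eqref{2.5} --- which reduces to $(s+1)$ times a nonzero basis vector being $0$ --- precisely when $s=p-1$, and otherwise fails. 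After verifying that every other sub-case of $m_1$ collapses to $0$, one concludes: for $\lambda=(0,s)$ the homogeneous maximal vectors are $v$, plus the first $m_1$ iff $s\neq 0$, plus the second iff $s=p-1$; this gives cases (1), (2) and the $\lambda=(0,0)$ instance of (3). For the atypical weights with $r\neq 0$ the same $m_1$-analysis, now complicated by the truncation $X_{-\epsilon_1+\epsilon_2}^{r+1}v=0$ coming from the restricted $\mathfrak{gl}(2)$-factor $V_\chi$ (in spirit the situation of the $\chi_4$ proof), yields $m_1=0$, so $K_\chi(\lambda)$ is irreducible; this finishes (3). Finally, that the factors so produced occur with multiplicity one and exhaust the series is obtained by a dimension count, controlling the Kac modules at the shifted weights $\lambda-\epsilon_1-\epsilon_2$ and $\lambda-\epsilon_1-\epsilon_3$ via Propositions~\ref{delta} and \ref{delta1} and the already-settled cases.

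The main difficulty is the prime-characteristic bookkeeping: identifying the $\nnn_0$-maximal vectors of $L_\chi^0(\lambda)$ and tracking exactly when the linear systems for $w_2$ and $w_3$ (solved over $\K$) become solvable or inconsistent as $s$ ranges over $\bbf_p$. The decisive and most delicate point is the weight $\lambda=(0,p-1)$: one must both exhibit the second extra maximal vector, of weight $\lambda-\epsilon_1-\epsilon_3$, and prove that the list of maximal vectors stops there, so that $K_\chi(\lambda)$ has exactly the three stated composition factors.
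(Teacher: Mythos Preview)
Your proposal follows the same line as the paper's proof, which is itself extremely terse: the paper merely says ``The proof is similar to the proof of Proposition~\ref{chi5}'' and then lists the maximal vectors in cases (1) and (2). Your sketch is in fact more informative than the paper's, and the observation that $X_{-\epsilon_2+\epsilon_3}$ acts invertibly (because $X_{-\epsilon_2+\epsilon_3}^{p}=1$ in $U_\chi(\ggg)$) is a genuinely efficient way to kill $m_2$ and $m_3$ that the paper does not make explicit. The second maximal vector you exhibit at $\lambda=(0,p-1)$ coincides with the one the paper writes down.

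One point to tighten: you assert that an $\nnn_0$-maximal vector of $L_\chi^0(\lambda)$ is ``a scalar multiple of $v$''. For subregular $\chi$ this is not automatic; for instance when $r=0$ and $0\le s\le p-2$ the vector $X_{-\epsilon_2+\epsilon_3}^{\,s+1}v$ is also $\nnn_0$-maximal. In the $\chi_4$ analysis the paper explicitly allows two possible highest weights for $w_1$, and the same care is needed here; one must check that starting from these alternative singular vectors does not produce new maximal vectors in $K_\chi(\lambda)$ beyond those already listed (equivalently, that the composition factors you obtain account for the full Grothendieck class). Also, your reference to Proposition~\ref{delta1} in the closing dimension count is misplaced---that proposition concerns regular semisimple $\chi$; the bookkeeping you want comes from Proposition~\ref{delta} together with the cases of the present proposition already established.
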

\begin{proof}
The proof is similar to the proof of Proposition \ref{chi5},
and we only list the maximal vectors of  $K_{\chi}(\lambda)$ up to a scalar:

 (1) If $\lambda=(0,s),s\neq 0,p-1$.
$$m_1=X_{-\epsilon_1-\epsilon_2}v+\frac{1}{s}X_{-\epsilon_1-\epsilon_3}X_{-\epsilon_2+\epsilon_3}v-\frac{1}{s}X_{-\epsilon_2-\epsilon_3}X_{-\epsilon_1+\epsilon_3}v,$$
    $$m_2=m_3=0.$$

 (2) If $\lambda=(0,p-1)$.
$$m_1=X_{-\epsilon_1-\epsilon_2}v+\frac{1}{s}X_{-\epsilon_1-\epsilon_3}X_{-\epsilon_2+\epsilon_3}v-\frac{1}{s}X_{-\epsilon_2-\epsilon_3}X_{-\epsilon_1+\epsilon_3}v,$$
$$m_1=X_{-\epsilon_1-\epsilon_3}v-X_{-\epsilon_2-\epsilon_3}X_{-\epsilon_1+\epsilon_3}X_{-\epsilon_2+\epsilon_3}^{p-1}v.$$
    $$m_2=m_3=0.$$
\end{proof}

\begin{prop}\label{chi0}
Let $\chi=0$ and $\lambda\in\Lambda_{\chi}$. Suppose $\lambda=(r, s)$ is atypical.
\begin{itemize}
\item[(1)]  If $r+s=p-1, s\neq 0,r\neq 0$, then
$$[K_{\chi}(\lambda)]=[L_{\chi}(\lambda)].$$
\item[(2)]  If $\lambda=(0,s)$, then
\[\left\{
    \begin{array}{ll}
      [K_{\chi}(\lambda)]=[L_{\chi}(\lambda)]+[L_{\chi}\lambda-\epsilon_1-\epsilon_2)], & \hbox{$s\geq 2$;} \\
      {[K_{\chi}(\lambda)]=[L_{\chi}(\lambda)]+[L_{\chi}(\lambda-\epsilon_1-\epsilon_2)]+[L_{\chi}(\lambda-\epsilon_2)]}, & \hbox{$s=1$;} \\
      {[K_{\chi}(0)]=[L_{\chi}(0)]+[L_{\chi}(-\epsilon_2-\epsilon_3)]+[L_{\chi}(-2\epsilon_1-2\epsilon_2-2\epsilon_3)]}, & \hbox{$s=0$.}
    \end{array}
  \right.
\]
\item[(3)]  If $\lambda=(r,0)$, then
 \[
\left\{
  \begin{array}{ll}
   [K_{\chi}(\lambda)]=[L_{\chi}(\lambda)]+[L_{\chi}\lambda-\epsilon_2-\epsilon_3)], & \hbox{$r\geq 2$;} \\
  {[K_{\chi}(\lambda)]=[L_{\chi}(\lambda)]+[L_{\chi}(\lambda-\epsilon_2-\epsilon_3)]+[L_{\chi}(\lambda-\epsilon_1)]}, & \hbox{$r=1$.}
  \end{array}
\right.
\]
\end{itemize}
\end{prop}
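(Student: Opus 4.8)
The plan is to use the method of the earlier propositions in this section. By Proposition~\ref{max v} it is enough to find all homogeneous maximal vectors of $K_\chi(\lambda)$, and by Propositions~\ref{m1}, \ref{m2} and \ref{m3} these are the vectors $m_1$, $m_2$, $m_3$, whose coordinates $w_1,w_2,w_3$ (or $w$) are governed by the explicit systems (2.1)--(2.12). The ingredient particular to $\chi=0$ is the $\mathfrak{sl}(3)$-module structure of $L^0_\chi(\lambda)$ recorded in item~(3): $L^0_\chi(\lambda)=\bar L(\lambda)$ when $r+s\le p-2$ or $\lambda\in\{(p-1,0),(0,p-1)\}$, and $L^0_\chi(\lambda)=\bar L(\lambda)/\bar L(\lambda-2\epsilon_1-\epsilon_2)$ when $r+s=p-1$ with $r,s\ge 1$. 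I would fix the PBW basis $\{X_{-\epsilon_1+\epsilon_3}^{c}X_{-\epsilon_2+\epsilon_3}^{b}X_{-\epsilon_1+\epsilon_2}^{a}v\}$ of the baby Verma module $Z^0_\chi(\lambda)$, record which of these monomials survive (and which relations they satisfy) in $L^0_\chi(\lambda)$, and observe that the weight constraints in (2.1)--(2.12) force the relevant weight spaces to be at most two-dimensional; each condition then becomes a small explicit linear system, solved exactly as in the proof of Proposition~\ref{chi5} and of the subregular cases treated above.

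To pass from the list of maximal vectors to the stated composition series I would compare $\ggg_0$-characters. Because $\ggg_{-1}$ is an abelian odd subalgebra with $\ggg_{-1}\cong\K^3$ as an $\mathfrak{sl}(3)$-module, $K_\chi(\lambda)\cong L^0_\chi(\lambda)\otimes\Lambda^\bullet\ggg_{-1}$ as $\ggg_0$-modules, so
$$[K_\chi(\lambda)]_{\ggg_0}=[L^0_\chi(\lambda)]\,\big(2[\K]+[\K^3]+[(\K^3)^*]\big).$$
Every composition factor $L_\chi(\mu)$ of $K_\chi(\lambda)$ contains $L^0_\chi(\mu)$ as its degree-$0$ part, the maximal vectors from the first step constrain which $\mu\ne\lambda$ may occur, and $K_\chi(\nu)$ is irreducible for typical $\nu$ by Proposition~\ref{delta}; running through the atypical weights, imposing consistency of these character identities, and comparing dimensions pins down the multiplicities. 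In case~(1), where $L^0_\chi(\lambda)$ is a proper quotient of $\bar L(\lambda)$, the monomials killed in the quotient make the systems inconsistent unless $w_1=w_2=w_3=0$, and $m_3$ vanishes since (2.12) would require the highest weight vector of $L^0_\chi(\lambda)$ to be annihilated by $X_{-\epsilon_1+\epsilon_3}$ and $X_{-\epsilon_2+\epsilon_3}$, which happens only for $\lambda=0$; hence $K_\chi(\lambda)$ is irreducible. In case~(2), $\lambda=(0,s)$, the computation reproduces the maximal vector $m_1=X_{-\epsilon_1-\epsilon_2}v+\frac1s X_{-\epsilon_1-\epsilon_3}X_{-\epsilon_2+\epsilon_3}v-\frac1s X_{-\epsilon_2-\epsilon_3}X_{-\epsilon_1+\epsilon_3}v$ of weight $\lambda-\epsilon_1-\epsilon_2$ for all $s$, a further maximal vector of weight $\lambda-\epsilon_2$ when $s=1$ (a monomial exponent collapsing modulo $p$), and when $s=0$ also $w=v$ satisfying (2.12), so that $m_3\ne0$ gives the bottom-degree factor recorded as $L_\chi(-2\epsilon_1-2\epsilon_2-2\epsilon_3)$. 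Case~(3), $\lambda=(r,0)$, is handled by an analogous but independent calculation (the two odd ideals $\ggg_{\pm1}$ of $\p(3)$ are not isomorphic $\ggg_0$-modules, so no symmetry reduces it to case~(2)), producing $m_2\ne0$ for $r\ge2$ and the extra factor $L_\chi(\lambda-\epsilon_1)$ at $r=1$.

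The main obstacle is twofold. First, the linear algebra is heavier than in the previous cases: $L^0_\chi(\lambda)$ is now a genuine finite-dimensional simple $\mathfrak{sl}(3)$-module with possibly nontrivial weight multiplicities, so one must carefully track which PBW monomials vanish or become dependent in $L^0_\chi(\lambda)$, and keep straight the several degenerate sub-cases $s,r\in\{0,1\}$ (and $\lambda=0$) together with the exponent collapses modulo $p$. Second, the count of maximal vectors by itself only yields a lower bound on the composition factors; upgrading it to the exact series genuinely relies on the $\ggg_0$-character identity above and on the dimensions of the $L_\chi(\mu)$, which are themselves only determined recursively, so verifying that this bootstrap is consistent and complete is the delicate step.
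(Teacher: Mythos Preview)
Your plan is essentially the paper's own: solve the systems (2.1)--(2.12) case by case in $L^0_\chi(\lambda)$, list the nonzero $m_1,m_2,m_3$, and read off the composition factors. The paper carries out part~(1) in detail (arriving, as you predict, at $w_3=-\tfrac1s X_{-\epsilon_1+\epsilon_3}v$ and the contradiction $X_{\epsilon_2-\epsilon_3}w_3\ne0$), then for (2) and (3) merely records the maximal vectors and asserts the composition series; it does \emph{not} perform the $\ggg_0$-character bootstrap you outline, so your second step is an addition rather than a divergence, and it addresses a point the paper leaves implicit.

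A few slips in your predicted outcomes are worth correcting before you compute. In case~(2) at $s=0$ the formula for $m_1$ with coefficient $1/s$ is unavailable; instead one finds $m_1=X_{-\epsilon_2-\epsilon_3}v$ (weight $-\epsilon_2-\epsilon_3$) together with $m_3\ne0$, matching the stated factors. At $s=1$ the extra maximal vector is of type $m_2$, namely $X_{-\epsilon_2-\epsilon_3}X_{-\epsilon_1-\epsilon_2}v-X_{-\epsilon_1-\epsilon_3}X_{-\epsilon_2-\epsilon_3}X_{-\epsilon_2+\epsilon_3}v$. In case~(3) the factor $L_\chi(\lambda-\epsilon_2-\epsilon_3)$ sits in degree $-1$, so it is produced by an $m_1$ (not $m_2$) for all $r\ge1$; the additional $m_2\ne0$ appears only at $r=1$, giving $L_\chi(\lambda-\epsilon_1)$ (note $2\epsilon_1+\epsilon_2+\epsilon_3=\epsilon_1$ on $\hhh$). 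With these adjustments your approach reproduces the paper's proof.
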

\begin{proof}
(1) Recall that $L_{\chi}^0(\lambda)\cong \bar{L}^0(\lambda)/\bar{L}^0(\lambda-2\epsilon_1-\epsilon_2)$, where $\bar{L}^0(\lambda)$ is the irreducible $\mathfrak{sl}(3,\K)$-module corresponding to the irreducible $\mathfrak{sl}(3,\bbc)$-module $L^0(\lambda)$ constructed in \cite{B67}. Let $v$ be a nonzero maximal vector of $\bar{L}^0(\lambda)$ of weight $\lambda$. Then $$X_{-\epsilon_2+\epsilon_3}X_{-\epsilon_1+\epsilon_2}v+rX_{-\epsilon_1+\epsilon_3}v$$ generates a maximal submodule of $\bar{L}^0(\lambda)$. Note that
$$\{X_{-\epsilon_1+\epsilon_3}^{c}X_{-\epsilon_2+\epsilon_3}^{b}X_{-\epsilon_1+\epsilon_2}^{a}v|0\leq a\leq r, 0\leq b\leq s, 0\leq c\leq r+s-a-b\}$$
form a basis of $\bar{L}^0(\lambda)$.
For any nonzero vector $w\in L_{\chi}^0(\lambda)$, write
$$w=\sum\limits_{0\leq a+b+c<p}k_{c,b,a}X_{-\epsilon_1+\epsilon_3}^{c}X_{-\epsilon_2+\epsilon_3}^{b}X_{-\epsilon_1+\epsilon_2}^{a}v,k_{c,b,a}\in\K.$$
Suppose $m_1, m_2, m_3$ given as in Proposition \ref{m1}, Proposition \ref{m2} and Proposition \ref{m3} are $\bbb$-maximal vectors.
By (\ref{2.4}) and (\ref{2.5}), $w_1$ is $\nnn_0$-maximal. Suppose $w_1\neq 0$. We may assume $w_1=v$.
The weight of $w_1$ is $\lambda=(r,s)$. By (\ref{2.1}) and (\ref{2.2}), the weight of $w_2$ is $(r+1,s-2)$ and the weight of $w_3$ is $(r-1,s-1)$.
So we may assume 
$$w_2=k_{0,1,0}X_{-\epsilon_2+\epsilon_3}v,$$ 
$$w_3=t_{1,0,0}X_{-\epsilon_1+\epsilon_3}v+t_{0,1,1}X_{-\epsilon_2+\epsilon_3}X_{-\epsilon_1+\epsilon_2}v.$$
By (\ref{2.5}), $X_{\epsilon_2-\epsilon_3}w_2=w_1$. So
$$w_2=\frac{1}{s}X_{-\epsilon_2+\epsilon_3}v.$$
By (\ref{2.3}), \begin{eqnarray*}
&&X_{-\epsilon_1+\epsilon_3}w_2+X_{-\epsilon_2+\epsilon_3}w_3\\
&=&\frac{1}{s}X_{-\epsilon_1+\epsilon_3}X_{-\epsilon_2+\epsilon_3}v+
t_{0,1,1}X_{-\epsilon_2+\epsilon_3}^2X_{-\epsilon_1+\epsilon_2}v+
t_{1,0,0}X_{-\epsilon_1+\epsilon_3}X_{-\epsilon_2+\epsilon_3}v\\
&=&0.
\end{eqnarray*}
So $t_{1,0,0}=-\frac{1}{s}$. By (\ref{2.4}),
$X_{\epsilon_1-\epsilon_2}w_3=(t_{0,1,1}r+\frac{1}{s})X_{-\epsilon_2+\epsilon_3}v=w_2$. So $t_{0,1,1}=0$, and
$$w_3=-\frac{1}{s}X_{-\epsilon_1+\epsilon_3}v.$$
As $X_{\epsilon_2-\epsilon_3}w_3=
-\frac{1}{s}X_{\epsilon_2-\epsilon_1}v\neq 0$, it contradicts (\ref{2.5}). So $w_1=0$.

Applying a similar argument, it can be checked that $w_1=w_2=w_3=0$. So $m_1=0$.
By (\ref{2.6})-(\ref{2.12}), we can also check that $m_2=m_3=0$. So $K_{\chi}(\lambda)$ is irreducible.

(2) For this case, the basis of $L_{\chi}^0(\lambda)$ is $$\{X_{-\epsilon_1+\epsilon_3}^{c}X_{-\epsilon_2+\epsilon_3}^{b}v|0 \leq b\leq s, 0 \leq c\leq s-b\}.$$
Then we just need to check:

(2.1) For $s\geq 2$, up to a scalar, $$m_1=X_{-\epsilon_1-\epsilon_2}v+\frac{1}{s}X_{-\epsilon_1-\epsilon_3}X_{-\epsilon_2+\epsilon_3}v-\frac{1}{s}X_{-\epsilon_2-\epsilon_3}X_{-\epsilon_1+\epsilon_3}v.$$
$$m_2=m_3=0.$$

(2.2) For $s=1$, up to a scalar, $$m_1=X_{-\epsilon_1-\epsilon_2}v+\frac{1}{s}X_{-\epsilon_1-\epsilon_3}X_{-\epsilon_2+\epsilon_3}v-\frac{1}{s}X_{-\epsilon_2-\epsilon_3}X_{-\epsilon_1+\epsilon_3}v.$$
$$m_2=X_{-\epsilon_2-\epsilon_3}X_{-\epsilon_1-\epsilon_2}v-X_{-\epsilon_1-\epsilon_3}X_{-\epsilon_2-\epsilon_3}X_{-\epsilon_2+\epsilon_3}v.$$
$$m_3=0.$$

(2.3) For $s=0$, up to a scalar,
$$m_1=X_{-\epsilon_2-\epsilon_3}v.$$
$$m_2=0.$$
$$m_3=X_{-\epsilon_1-\epsilon_3}X_{-\epsilon_2-\epsilon_3}X_{-\epsilon_1-\epsilon_2}.$$

(3) It can be proved in a similar way as (2).
\end{proof}
\begin{remark}
The characters of $K_{\chi}(\lambda)$ over $\K$ given in Proposition \ref{chi0} (2)(3) are the same as the characters of $K_{\chi}(\lambda)$ over $\bbc$ given in \cite{C15}.
\end{remark}
In summary of the arguments given above, we obtain the main theorem of this article:
\begin{theorem}
Keep notations as above, the multiplicities of simple modules of $K_{\chi}(\lambda)$ are given as follows:
\begin{itemize}
\item[(1)]  Suppose $\lambda\in\Lambda_{\chi}$ is typical. Then
$$[K_{\chi}(\lambda)]=[L_{\chi}(\lambda)].$$

\item[(2)]  Suppose $\chi=\chi_1,\chi_4$ or $\chi_6$. Then
$$[K_{\chi}(\lambda)]=[L_{\chi}(\lambda)].$$

\item[(3)]  Suppose $\chi=\chi_2$ and $\lambda=(0, s), s\neq 0$. Then
$$[K_{\chi}(\lambda)]=[L_{\chi}(\lambda)]+[L_{\chi}(\lambda-\epsilon_1-\epsilon_2)].$$

\item[(4)]  Suppose $\chi=\chi_3$ and $\lambda=(r, s)\in\Lambda_{\chi}$ be atypical.
\begin{itemize}
\item[(4.1)]  If $r+s=p-1, s\neq 0,r\neq 0$, then
$$[K_{\chi}(\lambda)]=[L_{\chi}(\lambda)].$$
\item[(4.2)]  If $\lambda=(0,s)$, then
\[\left\{
    \begin{array}{ll}
      [K_{\chi}(\lambda)]=[L_{\chi}(\lambda)]+[L_{\chi}\lambda-\epsilon_1-\epsilon_2)], & \hbox{$s\geq 2$;} \\
      {[K_{\chi}(\lambda)]=[L_{\chi}(\lambda)]+[L_{\chi}(\lambda-\epsilon_1-\epsilon_2)]+[L_{\chi}(\lambda-\epsilon_2)]}, & \hbox{$s=1$;} \\
      {[K_{\chi}(0)]=[L_{\chi}(0)]+[L_{\chi}(-\epsilon_2-\epsilon_3)]+[L_{\chi}(-2\epsilon_1-2\epsilon_2-2\epsilon_3)]}, & \hbox{$s=0$.}
    \end{array}
  \right.
\]
\item[(4.3)]  If $\lambda=(r,0)$, then
 \[
\left\{
  \begin{array}{ll}
   [K_{\chi}(\lambda)]=[L_{\chi}(\lambda)]+[L_{\chi}\lambda-\epsilon_2-\epsilon_3)], & \hbox{$r\geq 2$;} \\
  {[K_{\chi}(\lambda)]=[L_{\chi}(\lambda)]+[L_{\chi}(\lambda-\epsilon_2-\epsilon_3)]+[L_{\chi}(\lambda-\epsilon_1)]}, & \hbox{$r=1$.}
  \end{array}
\right.
\]
\end{itemize}

\item[(5)]  Suppose $\chi_5$ and $\lambda\in\Lambda_{\chi}$. Suppose $\lambda=(r, s)$ is atypical. Then
\begin{itemize}

\item[(5.1)] If $\lambda=(0,s),s\neq 0,p-1$.
$$[K_{\chi}(\lambda)]=[L_{\chi}(\lambda)]+[L_{\chi}(\lambda-\epsilon_1-\epsilon_2)].$$

\item[(5.2)] If $\lambda=(0,p-1)$.
$$[K_{\chi}(\lambda)]=[L_{\chi}(\lambda)]+[L_{\chi}(\lambda-\epsilon_1-\epsilon_2)]+[L_{\chi}(\lambda-\epsilon_1-\epsilon_3)].$$

\item[(5.3)] Otherwise,
$$[K_{\chi}(\lambda)]=[L_{\chi}(\lambda)].$$
\end{itemize}

\end{itemize}

\end{theorem}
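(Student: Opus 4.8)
The statement collects into one table the answers obtained in the preceding propositions, so the plan is to reduce every pair $(\chi,\lambda)$ to a case already treated. First recall two structural facts established above: it suffices to take $\chi$ among the $\GL(3)$-orbit representatives $\chi_1,\dots,\chi_6$, and every simple $U_\chi(\ggg)$-module is the head $L_\chi(\lambda)$ of some $K_\chi(\lambda)$ with $\lambda\in\Lambda_\chi$. Hence $[K_\chi(\lambda)]$ in the Grothendieck group is determined by the composition factors of $K_\chi(\lambda)$, and by Propositions \ref{max v} and \ref{m1}--\ref{m3} these are detected by the homogeneous $\bbb$-maximal vectors $m_1,m_2,m_3$, whose existence was analysed case by case.

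If $\delta(\lambda)\neq0$ then $K_\chi(\lambda)=L_\chi(\lambda)$ by Proposition \ref{delta}, which is case (1); from now on $\lambda$ is atypical. For $\chi=\chi_1$ regular semisimple, irreducibility of $K_\chi(\lambda)$ is Proposition \ref{delta1}; for $\chi=\chi_6$ regular nilpotent it is Proposition \ref{delta3}, applicable since $p>3=n$; for $\chi=\chi_4$ with $\lambda$ atypical it is Proposition \ref{chi5}. This is case (2). For the remaining $\chi_2$, $\chi_3=0$ and $\chi_5$ the composition series of $K_\chi(\lambda)$ at each atypical $\lambda$ was computed in the three propositions dedicated to these $\chi$'s, and their conclusions are exactly the statements recorded in (3), (4), (5). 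The only thing to add is that the sub-cases there exhaust all atypical $\lambda\in\Lambda_\chi$: writing $\lambda=(r,s)$ one has $\delta(\lambda)=r\,s\,(r+s+1)$, so for $\chi_2$ (where $r\in\bbf_p$ but $s\notin\bbf_p$) atypicality means $r=0$, matching (3), while for $\chi_3$ and $\chi_5$ (where $r,s\in\bbf_p$) atypicality means $r=0$, or $s=0$, or $r+s=p-1$, which is precisely the partition used in Proposition \ref{chi0} and in the $\chi_5$-proposition. Assembling these pieces yields the theorem.

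The real content is therefore not in this assembly but inside the component propositions, and I expect the main obstacle to lie there: for each atypical $(\chi,\lambda)$ one expands the unknown slots $w_1,w_2,w_3\in L^0_\chi(\lambda)$ of a putative maximal vector in the explicit monomial basis of $L^0_\chi(\lambda)$, turns the conditions \eqref{2.1}--\eqref{2.12} into a linear system in the coefficients $k_{c,b,a},t_{c,b,a}$, and must show that apart from the maximal vectors listed the system has only the zero solution --- keeping careful track of the $p$-periodic index shifts $-a\mapsto p-a$, and, when $\chi=0$, of the extra relation coming from the structure of $L^0_\chi(\lambda)$ recorded in (3.1)--(3.3) (where $L^0_\chi(\lambda)$ is a proper quotient of the mod-$p$ reduction $\bar L(\lambda)$). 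Once those computations are in place, the present theorem follows formally, and the multiplicity relations it records determine the characters $\mathsf{ch}\,L_\chi(\lambda)$ recursively.
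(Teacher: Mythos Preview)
Your proposal is correct and matches the paper's own treatment: the theorem is stated immediately after the case-by-case propositions with the preamble ``In summary of the arguments given above,'' and no separate proof is given. Your reduction of each item to the corresponding earlier proposition, together with the check that the listed sub-cases exhaust the atypical $\lambda\in\Lambda_\chi$ via $\delta(\lambda)=rs(r+s+1)$, is exactly the intended argument.
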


\end{document}